\newtheorem{thm}{Theorem}
\newtheorem{lem}{Lemma}[section]
\newtheorem{prop}{Proposition}[section]
\newtheorem{cor}{Corollary}
\newtheorem{conj}{Conjecture}
\numberwithin{equation}{section}
\begin{document}

\title
{Purely exponential Diophantine equations \\
with four terms of consecutive bases: \\contribution to Skolem's conjecture}

\author{Maohua Le}
\address{Maohua Le
\hfill\break\indent Institute of Mathematics, Lingnan Normal University,
\hfill\break\indent Zhanjiang, Guangdong, 524048.
\hfill\break\indent China
}
\email{lemaohua2008@163.com
}
\author{Takafumi Miyazaki}
\address{Takafumi Miyazaki
\hfill\break\indent Gunma University, Division of Pure and Applied Science,
\hfill\break\indent Graduate School of Science and Technology
\hfill\break\indent Tenjin-cho 1-5-1, Kiryu 376-8515.
\hfill\break\indent Japan
}
\email{tmiyazaki@gunma-u.ac.jp}
\thanks{The second-named author is supported by JSPS KAKENHI (No. 24K06642).}
\today
\subjclass[2020]{11D61}
\keywords{purely exponential equation, Skolem's conjecture, local-global principle}
\maketitle

\markboth
{M.-H. Le \& T. Miyazaki}
{purely exponential Diophantine equations with four terms of consecutive bases}

\begin{abstract}
We study purely exponential Diophantine equations with four terms of consecutive bases.
Notably, we prove that all solutions to the equation 
\[
n^x=(n+1)^y+(n+2)^z+(n+3)^w
\]
in positive integers $n,x,y,z$ and $w$ are given by $(n,x,y,z,w)=(2,5,1,1,2)$, $(3,3,2,1,1)$.
Our proof of this result for each $n \ge 4$ provides an explicit modulus $M$ such that the corresponding equation has no solution already modulo $M$.
This contributes to a classical problem posed by T. Skolem in 1930's on a local-global principle on purely exponential Diophantine equations. 
\end{abstract}

\section{Introduction}

Here we consider Diophantine equations in which all unknowns appear in exponents. 
More precisely, we consider the following equation:
\begin{equation}\label{pexp}
c_1\,{a_{11}}^{x_{11}}\cdots{a_{1\hspace{0.02cm}l_1}}^{x_{1\hspace{0.02cm}l_1}}+
c_2\,{a_{21}}^{x_{21}}\cdots{a_{2\hspace{0.02cm}l_2}}^{x_{2\hspace{0.02cm}l_2}}+
\cdots+
c_k\,{a_{k1}}^{x_{k1}}\cdots{a_{k\hspace{0.02cm}l_k}}^{x_{k\hspace{0.02cm}l_k}}
=0
\end{equation}
in unknown positive integers $x_{i j}$ with $i=1,2,\dots,k$ and $j=1,2,\dots,l_k$, where $k, l_1, l_2, \ldots, l_k$ are given integers with $k \ge 3$ and $l_1 \ge 0,\,l_2 \ge 1, \ldots,\,l_k \ge 1$, and each letter using $a$ or $c$ denotes a fixed nonzero integer not equal to $1$ and a fixed nonzero integer, respectively.
Here we regard the first term of the left side of \eqref{pexp} as the constant $c_1$ when $l_1=0$.
Equation \eqref{pexp} is a special case of $S$-unit equation with $k$-term and also it is usually called a purely exponential Diophantine equation, and many research works on it can be found in literature (cf.~\cite{ShTi,EvGy}).
Much of the interest among them lie in determining the solutions to equation \eqref{pexp}, and in estimating the number of solutions in general cases (cf.~\cite{BaBen,MiPi,MiPi2,MiPi3,ScoSt,BH3,DiHo}). 
In all such cases, restricting the unknown exponents through reduction by appropriate moduli and Baker's theory on linear forms in logarithms are often efficient. 
In particular, it is well-known that Baker's theory allows one to obtain effective upper bounds on the size of the solutions in general, provided that the number of terms $k$ is minimal, that is, $k=3$. 
For the case where $k>3$, it is known as an application of Schmidt Subspace Theorem that in most cases the number of solutions to \eqref{pexp} is finite and it can be bounded from above by an explicit constant depending only on $k$ and the number of base numbers $a_{i j}$.
In this context, it should be noted that effectively evaluating the solutions of any given $S$-unit equation with four or more terms is an important unsolved problem in Diophantine number theory. 

Very recently, the second-named author contributed to the above mentioned problem for $k>3$ by showing the following result:

\begin{prop}[\cite{M}]\label{th-3456}
Let $n$ be any positive integer such that $n \equiv 3 \pmod{4}.$ 
Then the equation
\[
n^x + (n+1)^y + (n+2)^z =(n+3)^w
\]
has no solution in positive integers $x,y,z$ and $w,$ except for $n=3,$ where all such solutions are given by $(x,y,z,w)=(3,1,1,2),(3,3,3,3).$
\end{prop}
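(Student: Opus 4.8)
The plan is to treat the equation as a cascade of congruence conditions that progressively force the shape of any solution, using a size estimate only to discard a degenerate case. First I would record the elementary observation that the left-hand side exceeds $n+(n+1)+(n+2)=3n+3>n+3$, so $w\ge 2$; since moreover $(n+3)^w$ equals the sum of the three terms, each of $n^x,(n+1)^y,(n+2)^z$ is smaller than $(n+3)^w$, whence $\max(x,y,z)$ and $w$ are comparable. Only $w\ge 2$ is needed to launch the local analysis.

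I would then run the following chain, each reduction feeding the next. Modulo $4$, using $n\equiv 3\pmod 4$ (so $4\mid n+1$, giving $(n+1)^y\equiv 0$ and $(n+2)^z\equiv 1$) together with $w\ge 2$ (so $2^w\equiv 0$), the equation becomes $3^x+1\equiv 0\pmod 4$, forcing $x$ \emph{odd}. Feeding this into the reduction modulo $n+1$, where $n\equiv -1,\ n+2\equiv 1,\ n+3\equiv 2$, collapses the equation to $(-1)^x+1\equiv 2^w\pmod{n+1}$, i.e. $2^w\equiv 0\pmod{n+1}$; hence $n+1$ is a power of $2$ and $n=2^a-1$ with $a\ge 2$. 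This is the decisive step, as it reduces the problem from all $n\equiv 3\pmod 4$ to the sparse Mersenne family. Reducing modulo $n+2=2^a+1$ (still with $x$ odd), the branch $y$ even would give $2^x\equiv 0\pmod{2^a+1}$, impossible, so $y$ is \emph{odd} and $2^x\equiv -2\pmod{2^a+1}$; multiplying by $2^{a-1}$ and using $2^a\equiv -1$ gives $2^{x+a-1}\equiv 1$, so the order $d$ of $2$ modulo $2^a+1$ divides $x+a-1$, and since $d\mid 2a$ but $d\nmid a$ (as $2^a\equiv -1\not\equiv 1$) the order $d$ is even, whence $x+a-1$ is even and $a$ is \emph{even}. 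Finally $a$ even makes $3\mid 2^a-1=n$ and $3\mid n+3$, so modulo $3$ one gets $1+2^z\equiv 0\pmod 3$ and $z$ is \emph{odd}. Thus any solution has $n=2^a-1$ with $a$ even, $x,y,z$ all odd, and $w\ge 2$.

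The remaining task---eliminating $a\ge 4$ and settling $a=2$, i.e. $n=3$---is where I expect the main difficulty to lie. The plan is to compare the relevant valuations directly. Writing the left-hand side as $\big(n^x+(n+2)^z\big)+(n+1)^y$ with $n+1=2^a$, a binomial expansion modulo $2^{2a}$ yields $n^x+(n+2)^z\equiv (x+z)2^a$, so (as $x+z$ is even) its $2$-adic valuation $v_2$ equals $a+v_2(x+z)$; matching this against $v_2\big((n+3)^w\big)=w$ ties $w$ to $a$ when $y=1$ and to $a+v_2(x+z)$ otherwise. In parallel, lifting-the-exponent applied to the factorizations of $n=2^a-1$ and of $n+3=2(2^{a-1}+1)$ controls the $3$-adic valuations of both sides. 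Combining these valuation identities with the size comparison $w\approx\max(x,y,z)$ should confine the exponents to a narrow window around $a$ and produce, for each even $a\ge 4$, an explicit modulus on which the equation is already insoluble; the base $n=3$ is then finished directly, the size estimate bounding the exponents into a range small enough that a short congruence check leaves exactly $(x,y,z,w)=(3,1,1,2)$ and $(3,3,3,3)$. The points I anticipate as delicate are making the order computation modulo $2^a+1$ and the mixed $2$-adic/$3$-adic bookkeeping uniform in $a$, and keeping the residual range for $n=3$ genuinely finite and checkable.
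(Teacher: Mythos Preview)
The paper does not prove this proposition; it quotes it from \cite{M} and only records that the proof there ``uses elementary congruence arguments and Baker's method in both complex and $p$-adic cases.'' So the comparison is against that one-line description.

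Your congruence cascade is correct and neatly done: $w\ge 2$; then $x$ odd via $\bmod\,4$; then $n+1\mid 2^w$ via $\bmod\,(n+1)$, giving $n=2^a-1$; then $y$ odd and $a$ even via $\bmod\,(2^a+1)$; then $z$ odd via $\bmod\,3$. This matches the ``elementary congruence arguments'' half of the cited proof and is exactly the kind of reduction that part is meant to achieve.

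The genuine gap is in the second half. Your proposed endgame---$2$-adic and $3$-adic valuation identities plus the size comparison $w\approx\max(x,y,z)$---does not yield any \emph{absolute} bound on the exponents. The size comparison only says the exponents are within constant ratios of one another; for each fixed even $a\ge 4$ there are still infinitely many tuples $(x,y,z,w)$ satisfying all of your parity, order, and valuation constraints (e.g.\ for $y\ge 2$ your own computation gives only $w=a+v_2(x+z)$, which leaves $x,z$ unbounded). Producing ``for each even $a\ge 4$ an explicit modulus on which the equation is already insoluble'' is a Skolem-type assertion that is itself the hard part, not a consequence of the valuation bookkeeping. Likewise for $n=3$: nothing you have written bounds $x,y,z,w$ into a finite range to be checked. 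This is precisely why the cited proof invokes Baker's theory (linear forms in complex and $p$-adic logarithms): that machinery converts the relations you have found into effective upper bounds on the exponents, after which a finite search finishes both the family $a\ge 4$ and the case $n=3$. Your plan, as written, is missing that transcendence input and cannot close without it.
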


The proof of this result uses elementary congruence arguments and Baker's method in both complex and $p$-adic cases.

Inspired by the above work, we consider the following four-term equation as its generalization:
\begin{equation} \label{eq-main}
n^x + \delta_1\, (n+1)^y + \delta_2\, (n+2)^z + \delta_3\, (n+3)^w=0
\end{equation}
in positive integers $x,y,z$ and $w$, where $n$ is any given positive integer greater than 1,  and $\delta_i \in \{1,-1\}$ for $i=1,2,3$ such that $(\delta_1,\delta_2,\delta_3) \ne (1,1,1)$. Thus we have
\begin{alignat}{3}
&n^x+(n+1)^y+(n+2)^z=(n+3)^w & \ \ \ & \text{if $(\delta_1,\delta_2,\delta_3)=(1,1,-1),$}
\label{eq-(1)(1)(-1)}\\
&n^x+(n+1)^y+(n+3)^w=(n+2)^z & \ & \text{if $(\delta_1,\delta_2,\delta_3)=(1,-1,1),$}
\label{eq-(1)(-1)(1)}\\
&n^x+(n+1)^y=(n+2)^z+(n+3)^w & \ & \text{if $(\delta_1,\delta_2,\delta_3)=(1,-1,-1),$} 
\label{eq-(1)(-1)(-1)}\\
&n^x+(n+2)^z+(n+3)^w=(n+1)^y & \ & \text{if $(\delta_1,\delta_2,\delta_3)=(-1,1,1),$}
\label{eq-(-1)(1)(1)}\\
&n^x+(n+2)^z=(n+1)^y+(n+3)^w & \ & \text{if $(\delta_1,\delta_2,\delta_3)=(-1,1,-1),$}
\label{eq-(-1)(1)(-1)}\\
&n^x+(n+3)^w=(n+1)^y+(n+2)^z & \ & \text{if $(\delta_1,\delta_2,\delta_3)=(-1,-1,1),$}
\label{eq-(-1)(-1)(1)}\\
&n^x=(n+1)^y+(n+2)^z+(n+3)^w & \ & \text{if $(\delta_1,\delta_2,\delta_3)=(-1,-1,-1).$}
\label{eq-(-1)(-1)(-1)}
\end{alignat}

Note that equations \eqref{eq-(1)(-1)(1)} and \eqref{eq-(-1)(-1)(1)} always have the solutions $(x,y,z,w)=(1,2,2,1)$ and $(x,y,z,w)=(1,1,1,1)$, respectively.
Also, a simple computer search suggests us to expect all other solutions of our equations are described as the following identities:
\begin{alignat*}{3}
&3^3+4+5=6^2, \ 3^3+4^3+5^3=6^3 
& \ \ & \text{for \eqref{eq-(1)(1)(-1)};}\\
&
2^3+3+5=4^2, \ 2^3+3^5+5=4^4, \ 2^5+3^3+5=4^3, \ 2^7+3+5^3=4^4 
& & \text{for \eqref{eq-(1)(-1)(1)};}\\
&
2+3^3=4+5^2, \ 2^3+3^4=4^3+5^2, \ 2^5+3^2=4^2+5^2, \ 3^3+4=5^2+6
& & \text{for \eqref{eq-(1)(-1)(-1)};}\\
&
3+5^2+6^2=4^3 
& & \text{for \eqref{eq-(-1)(1)(1)};}\\
&
2^2+4=3+5, \ 2^4+4^3=3^2+5, \ 2^6+4=3^3+5^3, \ 3^3+5^2=4^2+6^2
& & \text{for \eqref{eq-(-1)(1)(-1)};}\\
&
2^3+5=3^2+4 
& & \text{for \eqref{eq-(-1)(-1)(1)};}\\
&
2^5=3+4+5^2, \ 3^3=4^2+5+6 
& & \text{for \eqref{eq-(-1)(-1)(-1)}.}
\end{alignat*}

As the main result of this paper, we solve one of our equations completely without assuming any restriction on $n$, as follows:

\begin{thm}\label{th-1230}
Equation \eqref{eq-(-1)(-1)(-1)} has no solution, except for $n=2$ or $3,$ where all solutions are given by $(x,y,z,w)=(5,1,1,2)$ for $n=2,$ and by $(x,y,z,w)=(3,2,1,1)$ for $n=3.$
\end{thm}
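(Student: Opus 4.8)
The plan is to split the range of $n$ into the two exceptional bases $n=2,3$ (which carry the only solutions) and the generic range $n\ge4$, and to prove non-solvability for $n\ge4$ by a purely congruential argument, which is exactly what produces the advertised modulus $M$. I would first record the elementary size relation: since the three right-hand terms are positive and each is bounded above by $n^x$, taking logarithms gives $x>\max\{y,z,w\}$, while comparing $n^x$ with $3(n+3)^{\max\{y,z,w\}}\le 3(n+3)^{\,x-1}$ forces $x$ to be large (of order $n\log n$) once $n$ is. Thus the exponents are not absolutely bounded as $n$ varies, so a single application of Baker's method cannot settle all $n\ge4$; this is precisely what motivates seeking a local obstruction instead.

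The workhorse is reduction modulo each of the four consecutive bases, where the base numbers collapse to $\{0,\pm1,\pm2,\pm3\}$. Assuming $x\ge1$ this yields
\[
1+2^z+3^w\equiv 0 \pmod{n}, \qquad (-1)^x\equiv 1+2^w \pmod{n+1},
\]
\[
(-2)^x\equiv (-1)^y+1 \pmod{n+2}, \qquad (-3)^x\equiv(-2)^y+(-1)^z \pmod{n+3}.
\]
From the middle two I would extract a rigidity principle: if $x$ is even then $n+1\mid 2^w$, forcing $n+1$ to be a power of $2$; and if $y$ is odd then $n+2\mid 2^x$, forcing $n+2$ to be a power of $2$. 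Since consecutive integers $n+1$ and $n+2$ cannot both be powers of $2$ for $n\ge2$, the parity pattern ``$x$ even and $y$ odd'' is impossible, and more generally most parity patterns drive $n$ into a thin exceptional set. The outer congruences modulo $n$ and $n+3$ then impose the divisibility $n\mid 1+2^z+3^w$ together with a companion relation, which further pin the exponents.

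Next I would layer in a fixed small modulus, the cleanest being $3$. Because $n\equiv n+3\pmod3$, when $n\equiv1\pmod3$ both outer terms are $\equiv1$ and the equation collapses to $(n+1)^y+(n+2)^z\equiv0\pmod3$, i.e.\ $2^y\equiv0\pmod3$, which is impossible; hence every $n\equiv1\pmod3$ is eliminated outright. For $n\equiv0\pmod3$ one gets $z$ odd, and for $n\equiv2\pmod3$ one gets $x$ odd and $w$ even. Combining this mod-$3$ dichotomy with the parity/power-of-$2$ rigidity above leaves, for each remaining $n\ge4$, only a few admissible residue systems for $(x,y,z,w)$; I would then exhibit one further modulus — typically a small prime power such as $8$, $9$, or $7$, or one of $n+1,n+2,n+3$ itself — against which no admissible system survives, and set $M$ to be the least common multiple of the moduli used.

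The main obstacle is attaining uniformity over the infinitely many $n\ge4$, and it is concentrated chiefly in the exceptional families $n=2^a-1$ and $n=2^a-2$, where $n+1$, respectively $n+2$, is a power of $2$ and the rigidity principle becomes vacuous. For such $n$ one must still manufacture a working modulus by descending to a carefully chosen small prime power and exploiting the mod-$3$ constraints, and organizing this bookkeeping so that an explicit $M=M(n)$ can be written down uniformly across all structural cases is the delicate heart of the argument. Finally, the genuinely exceptional values $n=2$ (where $n+2=4$) and $n=3$ (where $n+1=4$) are the only ones admitting no local obstruction, since they carry the solutions $(2,5,1,1,2)$ and $(3,3,2,1,1)$; here I would use the elementary congruences to reduce to a short list of two- and three-term exponential equations, bound the remaining free exponents by lower bounds for linear forms in logarithms in both the complex and $p$-adic settings, and conclude by a finite search that recovers exactly these two solutions.
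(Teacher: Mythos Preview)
Your outline correctly identifies the core moduli---$3$, $n+1$, $n+2$---and the two rigidity statements (``$x$ even $\Rightarrow n+1$ is a $2$-power'', ``$y$ odd $\Rightarrow n+2$ is a $2$-power''), and these match the paper's Lemmas exactly. The gap lies in the generic case, where $n+1$ and $n+2$ are \emph{not} $2$-powers: you propose to ``exhibit one further modulus---typically a small prime power such as $8$, $9$, or $7$'', but do not say which, and there is no reason to expect any fixed such prime to work uniformly in $n$. The paper's missing ingredient is a quadratic-residue step: when $x$ is odd and $y$ is even, the congruence modulo $n+2$ gives $g\mid(2^{x-1}+1)$ for the odd part $g$ of $n+2$, so $-1$ is a square modulo $g$ and hence $g\equiv1\pmod4$; an analogous step with $-2$ a square modulo the odd part $f$ of $n+1$ pins $f\pmod8$. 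These refinements force $n$ into a specific class modulo $8$ (or $16$), and combining them with the mod-$3$ and mod-$4/8$ data kills each residue class $n\bmod4$ separately. This is what yields the single uniform modulus $M=12(n+1)(n+2)$ rather than a case-dependent one.

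Your ``exceptional family'' $n=2^r-1$ is also not handled by an auxiliary small prime but by an order argument: here $x$ is even, the mod-$3$ constraint forces $r$ even, and the mod-$(n+2)$ relation becomes $(2^r+1)\mid(2^{x-1}-1)$; writing $x-1=rQ+R$ with $0\le R<r$ gives $2^R\equiv\pm1\pmod{2^r+1}$ with $R$ odd, hence $0<R<r$, which is impossible by size. The other family $n=2^s-2$ collapses to $n=2$ via the quadratic-residue step. Finally, for $n=2,3$ the paper does not redo Baker's method but simply cites the tables of Alex--Foster; your proposed reduction to linear forms in logarithms would work but is unnecessary.
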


We can also provide some miscellaneous results on our equations other than \eqref{eq-(1)(-1)(1)}, \eqref{eq-(-1)(-1)(1)} and \eqref{eq-(-1)(-1)(-1)}, as follows:

\begin{thm}\label{th-miscellaneous}
The following hold.
\begin{itemize}
\item[(i)] 
Assume that $n \equiv 4 \pmod{16}$ or $n \equiv 8 \pmod{16}$ or $n \equiv 1 \pmod{8}$ or $n \equiv 2 \pmod{8}.$
Then equation \eqref{eq-(1)(1)(-1)} has no solution.
\item[(ii)] 
Assume that $n \equiv 4 \pmod{8}$ or $n \equiv 1 \pmod{4}$ or $n \equiv 2 \pmod{8}$ or $n \equiv 3 \pmod{8}.$
Then equation \eqref{eq-(1)(-1)(-1)} has no solution, except for $n=2$ or $3,$ where all solutions are given by $(x,y,z,w)=(1,3,1,2),(3,4,3,2),(5,2,2,2)$ for $n=2,$ and by $(x,y,z,w)=(3,1,2,1)$ for $n=3.$
\item[(iii)] 
Assume that $n \equiv 8 \pmod{16}$ or $n \equiv 12 \pmod{16}$ or $n \equiv 5 \pmod{8}$ or $n \equiv 2 \pmod{4}$ or $n \equiv 3 \pmod{8}.$
Then equation \eqref{eq-(-1)(1)(1)} has no solution, except for $n=3,$ where the only solution is given by $(x,y,z,w)=(1,3,2,2).$
\item[(iv)] 
Assume that $n \equiv 4 \pmod{16}$ or $n \equiv 8 \pmod{16}$ or $n \equiv 1 \pmod{8}$ or $n \equiv 2 \pmod{8}$ or $n \equiv 3 \pmod{8}.$ 
Then equation \eqref{eq-(-1)(1)(-1)} has no solution, except for $n=2$ or $3,$ where all solutions are given by $(x,y,z,w)=(2,1,1,1),(4,3,2,1),$\\$(6,1,3,3)$ for $n=2,$ and by $(x,y,z,w)=(3,2,2,2)$ for $n=3.$
\end{itemize}
\end{thm}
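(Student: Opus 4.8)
The plan is to treat all four parts by a uniform, essentially \emph{local} strategy: for each admissible residue class of $n$ I would exhibit a modulus $M$---typically a power of $2$, and in a few recalcitrant cases a power of $2$ times a small odd prime---for which the relevant equation among \eqref{eq-(1)(1)(-1)}, \eqref{eq-(1)(-1)(-1)}, \eqref{eq-(-1)(1)(1)}, \eqref{eq-(-1)(1)(-1)} is already insoluble, supplemented by elementary size estimates to dispose of the cases in which one term is forced to be small. Note that no transcendence input is needed here, in contrast with Proposition \ref{th-3456}; the hypotheses on $n$ modulo $4,8,16$ are precisely what one reads off from the $2$-adic analysis below.

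First I would record the $2$-adic structure of the four consecutive bases. Exactly two of $n,n+1,n+2,n+3$ are even, and which two they are, together with their $2$-adic valuations, is pinned down by the hypothesis on $n$ modulo $8$ or $16$. Reducing modulo $2^k$ then makes each even-base term vanish once its exponent is large, while the odd-base terms are governed by the elementary fact that an odd $u$ satisfies $u^m \equiv 1 \pmod{8}$ for $m$ even and $u^m \equiv u \pmod{8}$ for $m$ odd, with the analogous (slightly longer) description modulo $16$. The first step is therefore to read off, modulo $8$, the parities of the exponents attached to the odd bases and to discard the residue patterns that cannot balance; a short computation shows that in each part this already eliminates a large proportion of the possible exponent parities.

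The patterns surviving modulo $8$ fall into two types, which I would handle by two different devices. When a surviving pattern forces an even-base exponent to equal $1$ or $2$, the corresponding side becomes essentially linear in $n$, and a crude inequality of the shape (sum of three powers) $\ge 3n+3$ contradicts the smallness of the opposite side; this kills the ``boundary'' cases without further congruences. The remaining patterns, in which all even-base exponents may be taken large, are then pushed to modulus $16$---and, where a finer obstruction is required, to $32$ or to $2^k\cdot 3$---which I expect to close each of them after a finite, explicit residue check keyed to the multiplicative orders modulo $2^k$.

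Finally, the small bases $n=2$ and $n=3$ lie inside several of the admissible classes and carry exactly the genuine solutions listed in the statement, so these two values must be isolated rather than eliminated. Here the same modular relations instead bound the relevant exponents to a finite range---or the growth inequality bounds the largest exponent in terms of the others---after which the finitely many surviving tuples are matched directly against the listed solutions. The main obstacle I anticipate is organisational rather than conceptual: coordinating the case split so that every pairing of (the residue of $n$) with (the parities and small values of $x,y,z,w$) is covered by one of the three devices---vanishing modulo $2^k$, the size inequality, or an auxiliary odd prime---and, in particular, checking that the chosen power of $2$ is large enough to separate the surviving residues yet small enough to keep the verification finite and explicit.
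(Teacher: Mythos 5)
Your proposal rests on fixed moduli of the form $2^{k}$ (occasionally $2^{k}$ times a small odd prime) together with size estimates, but the obstructions that actually prove Theorem \ref{th-miscellaneous} do not live at such moduli: they live modulo $n+1$ and $n+2$, i.e.\ at moduli that grow with $n$. The paper's argument (Section \ref{sec-4}) applies Lemmas \ref{lem-modnplus1} and \ref{lem-modnplus2} to get divisibility conditions such as $g \mid (2^{x-1}-(-1)^{x})$ for the odd part $g$ of $n+2$; the parity of $x$ (supplied by the reduction mod $n+1$) then forces $\pm1$ or $\pm2$ to be a quadratic residue modulo $g$, hence pins down $g \bmod 8$ and therefore $n \bmod 16$ --- this is exactly where the hypotheses ``$n\equiv 4,8\pmod{16}$'' etc.\ come from. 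A concrete instance shows why your scheme cannot reproduce this. Take part (i) with $n=20$ (so $n\equiv 4\pmod{16}$): the equation $20^{x}+21^{y}+22^{z}=23^{w}$ is killed by the condition $11\mid 2^{x-1}-1$ with $x-1$ odd (impossible since $\mathrm{ord}_{11}(2)=10$), and the prime $11$ enters only because $n+2=2\cdot 11$. Modulo powers of $2$ the congruence is soluble for every $k$: taking $x,z$ large it reduces to $21^{y}\equiv 23^{w}\pmod{2^{k}}$, which has solutions since $23^{2}\equiv 1\pmod 4$ lies in the cyclic subgroup generated by $21\equiv 5\pmod 8$; and one checks directly that adjoining powers of $3$ does not help (mod $2^{a}3^{b}$ the constraints collapse to $x$ even, $y$ even, $z=1$, $w$ odd, which survive mod $32\cdot 9$ and beyond). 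Since the relevant odd part of $n+2$ ranges over arbitrary odd numbers (including large primes) as $n$ runs through a fixed residue class mod $16$ or mod $48$, no fixed modulus built from $2$ and finitely many small primes can serve for the whole class, and there is no size obstruction here either ($w$ may be arbitrarily large). So the central device of your plan fails; what is missing is precisely the reduction modulo $n+1$ and $n+2$ and the ensuing quadratic-residue step.

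Two smaller points. First, your intended treatment of $n=2$ and $n=3$ by ``bounding the exponents and checking finitely many tuples'' is not carried out and is not routine: the paper does not do this by hand but cites Alex--Foster \cite{AF} for some cases and invokes the Bert\'ok--Hajdu sieve \cite{BH} for the equations $2^{x}+3^{y}=4^{z}+5^{w}$ and $2^{x}+4^{z}=3^{y}+5^{w}$ (Lemmas \ref{lem-eq(1)(-1)(-1)-n2n3} and \ref{lem-eq(-1)(1)(-1)-n2n3}); elementary congruences alone do not obviously bound all four exponents there. Second, your observation that no transcendence input is needed is correct, and the mod $3$, mod $4$ and mod $8$ reductions you describe do appear in the paper (Lemmas \ref{lem-mod3} and \ref{lem-mod4}) --- but only as auxiliary parity information feeding into the mod $(n+1)$ and mod $(n+2)$ analysis, not as the decisive step.
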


Finally, we will introduce an application of (our proof of) Theorem \ref{th-1230} to Skolem's conjecture. 
Skolem had a profound conjecture regarding a local-global principle on purely exponential Diophantine equations (cf.~\cite[pp.\,398--399]{Sch}, \cite{Sk}). 
He asserts (not explicitly) that if a given purely exponential Diophantine equation has no solution, then it has no solution already modulo an appropriately chosen modulus.
Bert\'ok and Hajdu \cite{BH,BH2} proposed a natural formulation of his conjecture and provided many theoretical and numerical results, and also they gave a heuristic but efficient algorithm to produce an appropriate modulus for many given cases in the sense of the mentioned Skolem's problem (cf.~\cite{Ha}). 
When applied to equation \eqref{pexp}, the conjecture can be stated as follows:
\begin{conj}\label{conj-sk-bh}
Suppose that equation \eqref{pexp} has no solution.
Then there exists some integer $M$ with $M \ge 2$ such that the congruence
\[
c_1\,{a_{11}}^{x_{11}}\cdots{a_{1\hspace{0.02cm}l_1}}^{x_{1\hspace{0.02cm}l_1}}+
c_2\,{a_{21}}^{x_{21}}\cdots{a_{2\hspace{0.02cm}l_2}}^{x_{2\hspace{0.02cm}l_2}}+
\cdots+
c_k\,{a_{k1}}^{x_{k1}}\cdots{a_{k\hspace{0.02cm}l_k}}^{x_{k\hspace{0.02cm}l_k}}
\equiv 0 \mod{M}
\]
does not hold for any positive integers $x_{i j}$ with $i=1,2,\dots,k$ and $j=1,2,\dots,l_k.$
\end{conj}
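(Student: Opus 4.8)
The plan is to attack Conjecture \ref{conj-sk-bh} through its contrapositive, recast as a local-to-global statement: assume that for every modulus $M \ge 2$ the displayed congruence admits a solution in positive integers, and try to produce an honest integer solution of \eqref{pexp}. First I would pass to prime-power moduli. By the Chinese Remainder Theorem, solvability modulo all $M$ is equivalent to solvability modulo $p^k$ for every prime $p$ and every $k \ge 1$. Fixing $p$ and letting $k \to \infty$, I would use the fact that the residues mod $p^k$ of the system, as $k$ varies, form an inverse system of nonempty finite sets, so by compactness this limit is nonempty; interpreting each term $p$-adically via $a^x = \exp_p(x \log_p a)$ (after separating the prime-to-$p$ part, whose exponent lives in a finite cyclic group, from the $p$-part) yields a \emph{$p$-adic solution}, a tuple of exponents in $\mathbb{Z}_p$ satisfying \eqref{pexp} identically in $\mathbb{Z}_p$. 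Carrying this out for all $p$ gives one local solution in each $\mathbb{Z}_p$.

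The heart of the argument is then to descend from these local solutions to a single global solution in $\mathbb{N}$. The natural mechanism is to study the common zero set of the $p$-adic analytic functions $(t_{ij}) \mapsto \sum_i c_i \prod_j \exp_p(t_{ij} \log_p a_{ij})$; by Strassmann's theorem each such zero set is finite, which is the classical engine behind Skolem's $p$-adic method. For the minimal case $k=3$ one may additionally invoke Baker's theory on linear forms in logarithms, in both the complex and $p$-adic settings as in the proof of Proposition \ref{th-3456}, to obtain an \emph{effective} upper bound on the size of any genuine solution; the task would then reduce to showing that if no solution lies below this bound, some explicitly computable $p^k$ already obstructs the congruence.

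The step I expect to be the genuine obstacle is exactly this descent, and I do not expect to carry it through in full generality, since it is the content of a famous unsolved problem. Two difficulties are essential. For $k>3$ the only available finiteness statement comes from the Schmidt Subspace Theorem, which is \emph{ineffective}: it bounds the number of solutions but gives no algorithm to locate them or to certify their absence, so there is no known route from emptiness of the solution set to a computable modulus $M$. Even when $k=3$ and Baker's method supplies effective bounds, converting ``no small solution'' into ``no solution modulo $M$'' requires simultaneously controlling the multiplicative orders and $p$-adic valuations of all the bases $a_{ij}$ across a coordinated family of primes, and no general principle guarantees that a finite such family suffices.

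Accordingly, my realistic proposal is the route the paper itself pursues: rather than resolving the conjecture in general, one verifies it in explicit families by exhibiting the modulus directly. Concretely, for a fixed shape of equation \eqref{eq-main} one analyses the finitely many residue classes of the bases modulo a well-chosen $p^k$ (or a product of such), uses the periodicity of $a^x \bmod p^k$ in the exponent to reduce to finitely many exponent classes, and checks that no admissible combination satisfies the congruence. Theorem \ref{th-1230}, together with the proof strategy behind it, furnishes exactly such an $M$ for every $n \ge 4$ in the case \eqref{eq-(-1)(-1)(-1)}, thereby confirming Conjecture \ref{conj-sk-bh} for that family and lending concrete support to the general statement, which I would leave open.
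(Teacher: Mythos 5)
This statement is a conjecture (Skolem's problem in the Bert\'ok--Hajdu formulation), and the paper offers no proof of it; its only contribution toward it is the verification of a special case, Corollary \ref{cor-skolem}, which is extracted from the congruence arguments (the moduli $3$, $4$, $8$, $n+1$, $n+2$, all dividing $12(n+1)(n+2)$) in the proof of Theorem \ref{th-1230}. Your proposal correctly identifies exactly this situation --- the local-to-global descent is a famous open problem that neither you nor the paper resolves, and the realistic contribution is exhibiting explicit obstructing moduli for the family \eqref{eq-(-1)(-1)(-1)} with $n \ge 4$ --- so your final position coincides with what the paper actually does.
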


The proof of Theorem \ref{th-1230} includes the following result, which verifies Conjecture \ref{conj-sk-bh} in special cases.

\begin{cor}\label{cor-skolem}
For any positive integer $n$ with $n \ge 4,$ the congruence
\[
n^x\equiv (n+1)^y+(n+2)^z+(n+3)^w\mod{12(n+1)(n+2)}
\]
does not hold for any positive integers $x,y,z$ and $w.$
\end{cor}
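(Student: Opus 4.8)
The plan is to suppose that, for some fixed integer $n \ge 4$, there exist positive integers $x,y,z,w$ for which
\[
n^x \equiv (n+1)^y + (n+2)^z + (n+3)^w \pmod{12(n+1)(n+2)},
\]
and to reach a contradiction by examining this congruence modulo the divisors $n+1$, $n+2$, $4$ and $3$ of $M:=12(n+1)(n+2)$ (recalling that $12=4\cdot 3$). Since $y,z \ge 1$ we have $(n+1)^y \equiv 0 \pmod{n+1}$ and $(n+2)^z \equiv 0 \pmod{n+2}$, and using $n \equiv -1 \pmod{n+1}$, $n \equiv -2 \pmod{n+2}$ the first two reductions become
\[
(-1)^x \equiv 1 + 2^w \pmod{n+1}, \qquad (-2)^x \equiv 1 + (-1)^y \pmod{n+2}.
\]
The reductions modulo $4$ and modulo $3$, split according to $n \bmod 4$ and $n \bmod 3$, then constrain the parities of the exponents; in particular, when $n \equiv 1 \pmod 3$ the reduction modulo $3$ gives $0 \equiv (-1)^y \pmod 3$, which is absurd, so this residue class is eliminated at once.

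First I would dispose of the generic situation in which neither $n+1$ nor $n+2$ is a power of $2$. There the two displayed congruences force $x$ to be odd (otherwise $n+1 \mid 2^w$) and $y$ to be even (otherwise $n+2 \mid 2^x$), whereupon they simplify to $2^w \equiv -2 \pmod{n+1}$ and $2^x \equiv -2 \pmod{n+2}$. Feeding these parities into the mod $4$ and mod $3$ data, organized by the residue of $n$ modulo $12$, repeatedly forces one of $x,y,z,w$ to equal $1$ (or to be otherwise bounded); substituting such a value back into the relevant reduction then yields a divisibility $n+1 \mid c$ or $n+2 \mid c$ with $c$ a small absolute constant, contradicting $n+1, n+2 \ge 5$. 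This settles the generic case.

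It then remains to treat the cases in which $n+1$ or $n+2$ is a power of $2$. Two consecutive integers exceeding $2$ cannot both be powers of $2$, so at most one of these occurs and the other base still supplies clean parity information through its reduction. Moreover, if $n+1 = 2^a$ with $a$ odd then $n \equiv 1 \pmod 3$, already excluded; so one is left with the remaining power-of-$2$ subcases, where---because $2^a \mid M$---the full residue of the left side is available modulo $2^a$. Here the degenerate reduction still pins down a parity once the corresponding exponent is at least $a$, and the finitely many smaller values are ruled out using the surviving mod $3$, mod $4$ and mod-$2^a$ constraints.

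I expect the genuine difficulty to lie entirely in this last step: in the power-of-$2$ subcases one of the two sharpest congruences degenerates to $0$ and thereby loses its parity content, so the contradiction must instead be extracted from the finitely many small values of the ``lost'' exponent, carried out uniformly over the residues of $n$ modulo $12$ and kept consistent with the mod-$2^a$ information. The remainder of the argument is a systematic but essentially routine case check.
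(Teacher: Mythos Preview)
Your setup is correct and matches the paper's approach: reduce modulo $3$, $4$, $n+1$, $n+2$; the displayed congruences modulo $n+1$ and $n+2$ are right, and the elimination of $n\equiv 1\pmod 3$ is exactly as in the paper. The gap is in what you claim happens next.

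\textbf{The generic case is not settled by forcing an exponent to equal $1$.} Take $n\equiv 0\pmod{12}$ with neither $n+1$ nor $n+2$ a power of $2$. You correctly obtain $x$ odd, $y$ even; mod $3$ gives $z$ odd, and mod $4$ gives either $(z=1,\ w\text{ even})$ or $(z\ge 2,\ w\text{ odd})$. In the subcase $z\ge 3$ odd, $w$ odd, nothing is pushed to $1$ and no small divisibility $n+1\mid c$ or $n+2\mid c$ appears. The paper's actual mechanism here is a quadratic–residue step you omit: from $x$ odd and $2^{x-1}\equiv -1\pmod{g}$ (where $g$ is the odd part of $n+2$) one infers that $-1$ is a square mod $g$, hence $g\equiv 1\pmod 4$, forcing $n\equiv 0\pmod 8$. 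Then one reduces modulo $8$ (note $8\mid 12(n+1)(n+2)$ always, since one of $n+1,n+2$ is even) to obtain $z=2$ and $w$ odd, which contradicts mod $3$ in every remaining residue class. The same quadratic–residue trick is what handles $n\equiv 1$ and $n\equiv 2\pmod 4$. None of this is ``forcing an exponent to $1$''.

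\textbf{The power-of-$2$ case also needs a step you do not describe.} When $n+1=2^r$ with $r$ even (you rightly note $r$ odd is killed by mod $3$), your mod-$2^r$ analysis does give $x$ even, and then mod $(n+2)$ with $n+2=2^r+1$ yields $y$ even and $2^{x-1}\equiv 1\pmod{2^r+1}$ with $x-1$ odd. But this congruence is not vacuous, and ruling it out is not a matter of ``finitely many small values'': the paper writes $x-1=rQ+R$ and uses $2^r\equiv -1\pmod{2^r+1}$ to reach $2^R\equiv(-1)^Q$ with $0<R<r$ odd, which is impossible by size. This order argument is the crux of that subcase and is absent from your plan.

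In short, your decomposition coincides with the paper's, but the two genuinely nontrivial ingredients---the quadratic-residue inference $g\equiv 1\pmod 4$ (plus the ensuing mod-$8$ reduction) in the generic case, and the order/division-with-remainder argument modulo $2^r+1$ in the $n+1=2^r$ case---are missing, and the substitute you propose (forcing an exponent to $1$) does not actually occur.
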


The authors think that this is the first nontrivial result which supports Conjecture \ref{conj-sk-bh} for infinitely many cases beyond three terms.

\section{Congruence restrictions}\label{sec-3}

Let $n$ be a positive integer with $n>1$. 
We consider equation \eqref{eq-main}, that is,
\begin{equation} \label{eq-main-2}
n^x + \delta_1\, (n+1)^y + \delta_2\, (n+2)^z + \delta_3\, (n+3)^w=0
\end{equation}
in positive integers $x,y,z$ and $w$.
In this section, we will find many congruence restrictions on the solutions to \eqref{eq-main-2}. 
Below, we let $(x,y,z,w)$ be any solution to \eqref{eq-main-2}.

\begin{lem}\label{lem-mod3}
The following hold.
\begin{itemize}
\item[(i)] 
If $n \equiv 0 \pmod{3},$ then $(-1)^z=-\delta_1 \delta_2.$
\item[(ii)] 
If $n \equiv 1 \pmod{3},$ then $(-1)^y=\delta_1$ and $\delta_3=1.$
\item[(iii)] 
If $n \equiv 2 \pmod{3},$ then $(-1)^x=\delta_2$ and $(-1)^w=\delta_2 \delta_3.$
\end{itemize}
\end{lem}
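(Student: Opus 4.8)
The plan is to reduce equation \eqref{eq-main-2} modulo $3$ separately in each of the three residue classes of $n$, and to read off the claimed sign conditions from the resulting congruence. The starting point is the elementary observation that, among the four consecutive bases $n,n+1,n+2,n+3$, exactly one of $n,n+1,n+2$ is divisible by $3$ while $n+3\equiv n\pmod3$; consequently, after reduction, either one or two of the four powers vanish modulo $3$, and each surviving base is congruent to $\pm1$ modulo $3$, so the corresponding power reduces to $+1$ (when the base is $\equiv1$) or to $(-1)^e$ with $e$ its exponent (when the base is $\equiv-1$). In every case the congruence thus collapses to a relation among two or three terms, each equal to $\pm1$, together with the fixed signs $\delta_i$.

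The crucial step is then a boundedness argument. A sum of two values in $\{+1,-1\}$ lies in $\{-2,0,2\}$, and is divisible by $3$ only when it equals $0$; a sum of three such values lies in $\{-3,-1,1,3\}$, and is divisible by $3$ only when it equals $\pm3$, that is, when all three summands coincide. I would carry this out case by case. For $n\equiv0\pmod3$ the bases $n$ and $n+3$ drop out, leaving $\delta_1+\delta_2(-1)^z\equiv0\pmod3$; the two-term criterion forces $\delta_1=-\delta_2(-1)^z$, which rearranges to $(-1)^z=-\delta_1\delta_2$ and gives (i). For $n\equiv2\pmod3$ only $n+1$ drops out, leaving $(-1)^x+\delta_2+\delta_3(-1)^w\equiv0\pmod3$; the three-term criterion forces $(-1)^x=\delta_2=\delta_3(-1)^w$, which yields $(-1)^x=\delta_2$ and $(-1)^w=\delta_2\delta_3$, proving (iii).

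The slightly more delicate case is $n\equiv1\pmod3$, since here $n^x\equiv1$ contributes a fixed $+1$: only $n+2$ drops out, and the congruence becomes $1+\delta_1(-1)^y+\delta_3\equiv0\pmod3$. The three-term criterion again forces all summands equal, but now one of them is pinned to $+1$, so necessarily $\delta_1(-1)^y=1$ and $\delta_3=1$, i.e.\ $(-1)^y=\delta_1$ and $\delta_3=1$, establishing (ii). There is no genuine obstacle in this argument; the only points requiring care are to verify in each residue class exactly which bases are divisible by $3$, to track the parity of each exponent correctly, and to use that $\delta_i^{-1}=\delta_i$ since $\delta_i\in\{1,-1\}$.
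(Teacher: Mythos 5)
Your proof is correct and follows essentially the same route as the paper: reduce the equation modulo $3$ in each residue class of $n$, observe that the surviving terms are all $\pm 1$, and conclude from the size of the resulting sum. The only difference is that you spell out the final "boundedness" step (a sum of two or three terms from $\{\pm1\}$ is divisible by $3$ only in the forced configurations), which the paper leaves implicit.
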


\begin{proof}
Reducing equation \eqref{eq-main-2} modulo $3$ gives that 
\[
\begin{cases}
\, \delta_1\,1^y+\delta_2\,2^z + \delta_3\,3^w \equiv 0 \pmod{3} & \text{if $n \equiv 0 \pmod{3},$}\\
\,1^x+\delta_1 \,2^y+\delta_2 \,3^z + \delta_3 \,4^w \equiv 0 \pmod{3} & \text{if $n \equiv 1 \pmod{3},$}\\
\,2^x+\delta_1 \,3^y+\delta_2 \,4^z + \delta_3 \,5^w \equiv 0 \pmod{3} & \text{if $n \equiv 2 \pmod{3}.$}
\end{cases}
\]
Thus
\[
\begin{cases}
\, \delta_1 +\delta_2 (-1)^z \equiv 0 \pmod{3} & \text{if $n \equiv 0 \pmod{3},$}\\
\,1+\delta_1 (-1)^y+ \delta_3 \equiv 0 \pmod{3} & \text{if $n \equiv 1 \pmod{3},$}\\
\,(-1)^x + \delta_2 + \delta_3 (-1)^w \equiv 0 \pmod{3} & \text{if $n \equiv 2 \pmod{3}.$}
\end{cases}
\]
This implies the assertion.
\end{proof}

\begin{cor}\label{cor-n1mod3}
If $n \equiv 1 \pmod{3},$ then all equations $\eqref{eq-(1)(1)(-1)}, \eqref{eq-(1)(-1)(-1)}, \eqref{eq-(-1)(1)(-1)}$ and \eqref{eq-(-1)(-1)(-1)} have no solution.
\end{cor}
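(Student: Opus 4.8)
The plan is to obtain the corollary as an immediate consequence of Lemma~\ref{lem-mod3}(ii). The key observation I would make is that the four equations named in the statement are precisely those among \eqref{eq-(1)(1)(-1)}--\eqref{eq-(-1)(-1)(-1)} whose sign vector $(\delta_1,\delta_2,\delta_3)$ has third coordinate $\delta_3 = -1$. Reading off the labels, \eqref{eq-(1)(1)(-1)}, \eqref{eq-(1)(-1)(-1)}, \eqref{eq-(-1)(1)(-1)} and \eqref{eq-(-1)(-1)(-1)} correspond to $(\delta_1,\delta_2,\delta_3)$ equal to $(1,1,-1)$, $(1,-1,-1)$, $(-1,1,-1)$ and $(-1,-1,-1)$, respectively, so each indeed has $\delta_3 = -1$.

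Given this, the argument proceeds in a single step. Suppose $n \equiv 1 \pmod{3}$ and that one of these four equations admits a solution $(x,y,z,w)$. Regarding that solution as a solution of \eqref{eq-main-2} with the corresponding sign vector, the second assertion of Lemma~\ref{lem-mod3}(ii) forces $\delta_3 = 1$. This directly contradicts $\delta_3 = -1$, and hence no such solution can exist. I would phrase the conclusion uniformly across all four cases to avoid writing out each one separately.

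There is no genuine obstacle here, since the entire content is carried by Lemma~\ref{lem-mod3}. The only care required is the bookkeeping of matching each equation label to its sign vector, so as to confirm that exactly the four equations with $\delta_3 = -1$ are excluded, while the three equations with $\delta_3 = 1$ (namely \eqref{eq-(1)(-1)(1)}, \eqref{eq-(-1)(1)(1)} and \eqref{eq-(-1)(-1)(1)}) are left unconstrained by this particular congruence condition, consistent with the fact that two of them genuinely have solutions.
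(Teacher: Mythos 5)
Your proposal is correct and matches the paper's (implicit) argument exactly: the corollary is stated without proof precisely because it follows immediately from Lemma~\ref{lem-mod3}(ii), which forces $\delta_3=1$ when $n\equiv 1\pmod{3}$, ruling out the four equations with $\delta_3=-1$. Your bookkeeping of the sign vectors is accurate, so nothing further is needed.
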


\begin{lem}\label{lem-mod4}
The following hold.
\begin{itemize}
\item[(i)] 
If $n \equiv 0 \pmod{4},$ then at least one of the following cases holds.
\begin{itemize}
\item[$\bullet$] $(-1)^w=-\delta_1 \delta_3$ and $z>1.$
\item[$\bullet$] $(-1)^w=\delta_1 \delta_3$ and $z=1.$
\end{itemize}
Further, if $\delta_1=\delta_2=\delta_3$ and $n \equiv 0 \pmod{8},$ then $z=2$ and $w$ is odd.
\item[(ii)] 
If $n \equiv 1 \pmod{4},$ then at least one of the following cases holds.
\begin{itemize}
\item[$\bullet$] $(-1)^z=-\delta_2$ and $y>1.$
\item[$\bullet$] $(-1)^z=\delta_2$ and $y=1.$
\end{itemize}
\item[(iii)] 
If $n \equiv 2 \pmod{4},$ then at least one of the following cases holds.
\begin{itemize}
\item[$\bullet$] $(-1)^y=-\delta_1 \delta_3$ and $x>1.$
\item[$\bullet$] $(-1)^y=\delta_1 \delta_3$ and $x=1.$
\end{itemize}
\item[(iv)] 
If $n \equiv 3 \pmod{4},$ then at least one of the following cases holds.
\begin{itemize}
\item[$\bullet$] $(-1)^x=-\delta_2$ and $w>1.$
\item[$\bullet$] $(-1)^x=\delta_2$ and $w=1.$
\end{itemize}
\end{itemize}
\end{lem}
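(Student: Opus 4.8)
The plan is to imitate the proof of Lemma~\ref{lem-mod3}, reducing equation \eqref{eq-main-2} modulo $4$ within each residue class $n \bmod 4$. The structural fact that makes this uniform is that among four consecutive integers $n, n+1, n+2, n+3$, exactly one is $\equiv 0 \pmod 4$ and exactly one is $\equiv 2 \pmod 4$, while the remaining two are odd, hence $\equiv \pm 1 \pmod 4$. First I would record, for each of the four cases, which base plays which role: for instance when $n \equiv 0 \pmod 4$ the base $n$ vanishes mod $4$, the base $n+2$ is $\equiv 2$, and $n+1 \equiv 1$, $n+3 \equiv -1$; the other three cases are entirely analogous, the base that is $\equiv 2 \pmod 4$ being $n+1$, $n$, $n+3$ in cases (ii), (iii), (iv) respectively.

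Next I would carry out the reduction. The term whose base is $\equiv 0 \pmod 4$ drops out entirely (any positive power of it is $\equiv 0$), and the two odd-base terms contribute $\pm 1$ according to the parity of their exponents; call the sum of these two contributions $A$, so that $A \in \{-2, 0, 2\}$ is even. The term whose base is $\equiv 2 \pmod 4$, say with coefficient $\delta \in \{1,-1\}$ and exponent $e$, contributes $2\delta$ if $e = 1$ and $0$ if $e \ge 2$. Thus the reduced equation takes the uniform shape $A + \delta\, 2^e \equiv 0 \pmod 4$. Writing $A = 2t$ with $t \in \{-1,0,1\}$, I would argue: if $e \ge 2$ then $A \equiv 0 \pmod 4$ forces $t = 0$, i.e.\ $A = 0$ and the two odd contributions cancel; if $e = 1$ then $2t + 2\delta \equiv 0 \pmod 4$ forces $t \equiv \delta \equiv 1 \pmod 2$, hence $t = \pm 1$, i.e.\ $A \neq 0$ and the two odd contributions coincide. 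Translating the single alternative ``$A = 0$'' (resp.\ ``$A \neq 0$'') back into the parity condition on the relevant exponent then yields precisely the two bulleted cases in each of (i)--(iv), with $e$ being $z, y, x, w$ respectively.

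For the additional assertion in (i), I would instead reduce modulo $8$. Here $\delta_1 = \delta_2 = \delta_3$ together with the standing hypothesis $(\delta_1,\delta_2,\delta_3) \neq (1,1,1)$ forces all three signs to equal $-1$, so the equation is \eqref{eq-(-1)(-1)(-1)}. Since $n \equiv 0 \pmod 8$ gives $n^x \equiv 0 \pmod 8$, the reduction becomes $1 + 2^z + 3^w \equiv 0 \pmod 8$. I would then enumerate the residues $2^z \bmod 8 \in \{2,4,0\}$ (for $z = 1$, $z = 2$, $z \ge 3$) and $3^w \bmod 8 \in \{3,1\}$ (for $w$ odd, $w$ even) and observe that the congruence holds only for $z = 2$ together with $w$ odd.

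I do not expect any substantial obstacle: the whole lemma is a finite case analysis, and the only points requiring care are the uniform bookkeeping of which base is even in each residue class and the clean translation of the dichotomy $A = 0$ versus $A \neq 0$ into the stated parity conditions. The one mild subtlety, in the mod-$8$ refinement, is merely verifying that among the handful of possible residue combinations exactly the pair consisting of $z = 2$ and odd $w$ survives.
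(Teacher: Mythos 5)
Your proposal is correct and follows essentially the same route as the paper: reduce modulo $4$ in each residue class (where the paper simply tabulates the four congruences and reads off the dichotomy that you phrase uniformly as $A+\delta\,2^e\equiv 0 \pmod 4$ with $A\in\{-2,0,2\}$), and handle the refinement in (i) by reducing modulo $8$ to $1+2^z+3^w\equiv 0 \pmod 8$.
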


\begin{proof}
The proof proceeds similarly to that of Lemma \ref{lem-mod3}.
Reducing equation \eqref{eq-main-2} modulo $4$ implies that 
\[
\begin{cases}
\,\delta_1\,1^y + \delta_2\,2^z + \delta_3\,3^w \equiv 0 \pmod{4} & \text{if $n \equiv 0 \pmod{4},$}\\
\,1^x + \delta_1\, 2^y + \delta_2\, 3^z + \delta_3\,4^w \equiv 0 \pmod{4} & \text{if $n \equiv 1 \pmod{4},$}\\
\,2^x + \delta_1\, 3^y + \delta_2\, 4^z + \delta_3\, 5^w \equiv 0 \pmod{4} & \text{if $n \equiv 2 \pmod{4},$}\\
\,3^x + \delta_1\, 4^y + \delta_2\, 5^z + \delta_3\, 6^w \equiv 0 \pmod{4} & \text{if $n \equiv 3 \pmod{4}.$}
\end{cases}
\]
Thus
\[
\begin{cases}
\,\delta_1 + \delta_2\,2^z + \delta_3(-1)^w \equiv 0 \pmod{4} & \text{if $n \equiv 0 \pmod{4},$}\\
\,1 + \delta_1\, 2^y + \delta_2 (-1)^z \equiv 0 \pmod{4} & \text{if $n \equiv 1 \pmod{4},$}\\
\,2^x + \delta_1 (-1)^y + \delta_3 \equiv 0 \pmod{4} & \text{if $n \equiv 2 \pmod{4},$}\\
\,(-1)^x + \delta_2 + \delta_3\, 2^w \equiv 0 \pmod{4} & \text{if $n \equiv 3 \pmod{4}.$}
\end{cases}
\]
This implies the assertion, except the second one in (i).
If $\delta_1=\delta_2=\delta_3$ and $n \equiv 0 \pmod{8},$ then reducing equation \eqref{eq-main-2} modulo $8$ implies that 
\[
1+2^z+3^w \equiv 0 \mod{8}.
\]
This implies that $z=2$ and $w$ is odd.
\end{proof}

\begin{lem}\label{lem-modnplus1}
At least one of the following cases holds.
\begin{itemize}
\item[(i)] 
$(-1)^x=-\delta_2,$ and $n=2^r-1$ for some $r \in \mathbb N$ with $r \ge 2.$
\item[(ii)] 
$(-1)^x=\delta_2,$ $w=1,$ and $2(\delta_2+\delta_3) \equiv 0 \pmod{n+1}.$
\item[(iii)] 
$(-1)^x=\delta_2,$ $w>1,$ and $n \in \{f-1,2f-1\}$ for some odd $f \in \mathbb N$ with $f>1$ such that $f \mid (2^{w-1}+\delta_2 \delta_3).$
\end{itemize}
\end{lem}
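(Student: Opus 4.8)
The plan is to reduce equation \eqref{eq-main-2} modulo $n+1$. Since $n \equiv -1$, $n + 2 \equiv 1$ and $n + 3 \equiv 2 \pmod{n+1}$, while $(n+1)^y \equiv 0$, the whole equation collapses to the single congruence
\[
(-1)^x + \delta_2 + \delta_3\, 2^w \equiv 0 \pmod{n+1}.
\]
The three cases of the lemma then emerge from analyzing this relation according to the sign of $(-1)^x$ relative to $\delta_2$, exploiting that $(-1)^x + \delta_2$ can only equal $0$ or $2\delta_2 = \pm 2$.

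First I would handle the case $(-1)^x = -\delta_2$, where the congruence reduces to $\delta_3\, 2^w \equiv 0 \pmod{n+1}$; as $\delta_3 = \pm 1$ this forces $(n+1) \mid 2^w$, so $n+1$ is a power of $2$. Since $n > 1$ gives $n + 1 \ge 3$, we obtain $n = 2^r - 1$ with $r \ge 2$, which is case (i).

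In the complementary case $(-1)^x = \delta_2$ the congruence becomes $2\delta_2 + \delta_3\, 2^w \equiv 0 \pmod{n+1}$. If $w = 1$ this is precisely $2(\delta_2 + \delta_3) \equiv 0 \pmod{n+1}$, giving case (ii). If $w > 1$, I factor out a $2$ to get $2(\delta_2 + \delta_3\, 2^{w-1}) \equiv 0 \pmod{n+1}$; here $2^{w-1}$ is even, so $\delta_2 + \delta_3\, 2^{w-1}$ is odd, whence the left-hand side has $2$-adic valuation exactly $1$. Writing $n + 1 = 2^s f$ with $f$ odd and comparing valuations forces $s \le 1$, so $n + 1 \in \{f, 2f\}$, i.e.\ $n \in \{f - 1, 2f - 1\}$. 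The odd part then satisfies $f \mid (\delta_2 + \delta_3\, 2^{w-1})$, and multiplying by the unit $\delta_3$ (using $\delta_3^2 = 1$) rewrites this as $f \mid (2^{w-1} + \delta_2\delta_3)$; since $n > 1$ excludes $f = 1$, this is case (iii).

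The reduction and the sign trichotomy are routine; the one place demanding care is the $2$-adic bookkeeping in case (iii). There I must verify that the factor $2^{w-1}$ is genuinely even (which relies on $w > 1$) so that $\delta_2 + \delta_3\, 2^{w-1}$ stays odd, correctly isolate the odd part $f$ of $n + 1$, and recast the divisibility into the symmetric form $f \mid (2^{w-1} + \delta_2\delta_3)$ recorded in the statement. I would also confirm that $n > 1$ together with the constraint $s \le 1$ forces $f$ to be odd and strictly greater than $1$.
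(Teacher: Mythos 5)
Your proposal is correct and follows essentially the same route as the paper: reduce modulo $n+1$ to get $(-1)^x+\delta_2+\delta_3\,2^w\equiv 0$, split on the sign of $(-1)^x$ relative to $\delta_2$, and in the subcase $w>1$ isolate the odd part of $n+1$ from the factorization $2(\delta_2+\delta_3\,2^{w-1})=2\delta_3(2^{w-1}+\delta_2\delta_3)$. The $2$-adic bookkeeping you flag is handled in the paper in exactly the way you describe.
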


\begin{proof}
Reducing equation \eqref{eq-main-2} modulo $n+1$ implies that 
\[
(-1)^x + \delta_2 + \delta_3\,2^w \equiv 0 \mod{n+1}.
\]
We distinguish two cases according to whether $(-1)^x=-\delta_2$ or $\delta_2$.

First, assume that $(-1)^x=-\delta_2$.
Then $2^w \equiv 0 \pmod{n+1}$.
Thus $n+1=2^r$ for some $r \in \mathbb N$ with $r \le w$.
Since $n>1$, one has $r>1$. 

Next, assume that $(-1)^x=\delta_2$.
Then 
\[
2\delta_2 + \delta_3\,2^w \equiv 0 \mod{n+1}.
\] 
If $w=1$, then $2(\delta_2+\delta_3) \equiv 0 \pmod{n+1}$.
It remains to consider the case where $w>1$.
Since 
\[
2\delta_3\,(2^{w-1}+\delta_2 \delta_3) \equiv 0 \mod{n+1},
\]
it follows that $n+1=2^{\nu_{2}(n+1)}f$ with $\nu_{2}(n+1) \le 1$ for some odd $f$ such that $f \mid (2^{w-1}+\delta_2 \delta_3).$
\par
To sum up, this completes the proof.
\end{proof}

\begin{lem}\label{lem-modnplus2}
At least one of the following cases holds.
\begin{itemize}
\item[(i)] 
$(-1)^y=-\delta_1\delta_3,$ and $n=2^s-2$ for some $s \in \mathbb N$ with $s \ge 2.$
\item[(ii)] 
$(-1)^y=\delta_1\delta_3, \ x=1,$ and $2-2\delta_3 \equiv 0 \pmod{n+2}.$
\item[(iii)] 
$(-1)^y=\delta_1\delta_3, \ x>1,$ and $n \in \{g-2,2g-2\}$ for some odd $g \in \mathbb N$  such that $g \mid (2^{x-1}+\delta_3 (-1)^x).$
\end{itemize}
\end{lem}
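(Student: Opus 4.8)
The plan is to mirror the proof of Lemma \ref{lem-modnplus1}, now reducing modulo $n+2$ instead of $n+1$. Since $n \equiv -2$, $n+1 \equiv -1$, $n+2 \equiv 0$, and $n+3 \equiv 1 \pmod{n+2}$, equation \eqref{eq-main-2} collapses to
\[
(-1)^x 2^x + \delta_1 (-1)^y + \delta_3 \equiv 0 \pmod{n+2}.
\]
The whole argument then splits according to the value of $(-1)^y$, which must equal either $-\delta_1\delta_3$ or $\delta_1\delta_3$.

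First I would treat the case $(-1)^y = -\delta_1\delta_3$. Here $\delta_1(-1)^y = -\delta_3$, so the last two terms cancel and the congruence forces $2^x \equiv 0 \pmod{n+2}$. Hence $n+2$ is a power of $2$, say $n+2 = 2^s$; the hypothesis $n>1$ gives $n+2 \ge 4$, so $s \ge 2$. This is exactly case (i).

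Next I would treat the case $(-1)^y = \delta_1\delta_3$, where $\delta_1(-1)^y = \delta_3$ and the congruence becomes $(-1)^x 2^x + 2\delta_3 \equiv 0 \pmod{n+2}$. If $x=1$ this reads $-2 + 2\delta_3 \equiv 0$, i.e. $2 - 2\delta_3 \equiv 0 \pmod{n+2}$, giving case (ii). If $x>1$, I would factor the left side as
\[
2\,(-1)^x\bigl(2^{x-1} + \delta_3(-1)^x\bigr) \equiv 0 \pmod{n+2},
\]
using $(-1)^{2x}=1$. Since $x>1$, the quantity $2^{x-1} + \delta_3(-1)^x$ is odd, so the factor $2\bigl(2^{x-1}+\delta_3(-1)^x\bigr)$ carries exactly one factor of $2$. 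Writing $n+2 = 2^{\nu_2(n+2)}g$ with $g$ odd then forces $\nu_2(n+2) \le 1$ and $g \mid (2^{x-1} + \delta_3(-1)^x)$, whence $n+2 \in \{g, 2g\}$, i.e. $n \in \{g-2, 2g-2\}$. This is case (iii), completing the proof.

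The computation is entirely routine and poses no real obstacle; the only points requiring care are the algebraic rewriting $(-1)^x 2^{x-1} + \delta_3 = (-1)^x\bigl(2^{x-1} + \delta_3(-1)^x\bigr)$ that puts the divisibility into the stated shape, and the parity check guaranteeing $\nu_2(n+2) \le 1$ in the final subcase.
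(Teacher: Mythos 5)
Your proposal is correct and follows essentially the same route as the paper: reduce modulo $n+2$ to get $(-2)^x+\delta_1(-1)^y+\delta_3\equiv 0$, split on the sign of $(-1)^y$, and in the second case factor out $2$ and use that $2^{x-1}+\delta_3(-1)^x$ is odd for $x>1$ to force $\nu_2(n+2)\le 1$. The algebraic details (including the rewriting $(-1)^x2^x+2\delta_3=2(-1)^x(2^{x-1}+\delta_3(-1)^x)$) match the paper's argument exactly.
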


\begin{proof}
The proof proceeds almost similarly to that of Lemma \ref{lem-modnplus1}.
Reducing equation \eqref{eq-main-2} modulo $n+2$ implies that 
\[
(-2)^x + \delta_1\,(-1)^y + \delta_3 \equiv 0 \mod{n+2}.
\]
We distinguish two cases according to whether $(-1)^y=-\delta_1 \delta_3$ or $\delta_1 \delta_3$.

First, assume that $(-1)^y=-\delta_1 \delta_3$.
Then $2^x \equiv 0 \pmod{n+2}$, and this implies that we are in case (i).

Next, assume that $(-1)^y=\delta_1 \delta_3$.
Then
\[
2^{x}+2\delta_3 (-1)^x \equiv 0 \mod{n+2}.
\]
If $x=1$, then $2-2\delta_3 \equiv 0 \pmod{n+2}$.
It remains to consider the case where $x>1$.
Since
\[
2\,(\,2^{x-1} + \delta_3 (-1)^x\,) \equiv 0 \mod{n+2},
\]
it follows that $n+2=2^{\nu_{2}(n+2)}g$ with $\nu_{2}(n+2) \le 1$ for some odd $g$ such that $g \mid (\,2^{x-1} + \delta_3 (-1)^x\,).$
\par
To sum up, this completes the proof.
\end{proof}

\section{Proof of Theorem \ref{th-1230}}\label{sec-2}

Let $n$ be any positive integer with $n>1$. 
We consider equation \eqref{eq-(-1)(-1)(-1)}, that is,
\begin{equation} \label{eq-th1}
n^x=(n+1)^y+(n+2)^z+(n+3)^w
\end{equation}
in positive integers $x,y,z$ and $w$.
In this section, we let $(x,y,z,w)$ be any solution to this equation.

\begin{lem}\label{lem-xge2}
$x \ge 2.$
\end{lem}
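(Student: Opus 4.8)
The plan is to argue by a direct size comparison, using only the positivity of the exponents and the ordering of the four consecutive bases. Suppose for contradiction that the conclusion fails, so that $x=1$ (recall $x$ is a positive integer, so $x\ge 1$ automatically). Then equation \eqref{eq-th1} reads
\[
n=(n+1)^y+(n+2)^z+(n+3)^w .
\]

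Next I would bound the right-hand side from below. Since $y,z,w\ge 1$, each summand satisfies $(n+1)^y\ge n+1$, $(n+2)^z\ge n+2$, and $(n+3)^w\ge n+3$; in fact already the single term $(n+1)^y\ge n+1>n$ forces the right-hand side to strictly exceed $n$. This contradicts the displayed equality, so the assumption $x=1$ is untenable and therefore $x\ge 2$.

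There is no real obstacle here: the statement is a trivial archimedean inequality, and the only point to record is that the smallest base $n$ appears on the side with the unknown exponent $x$, while the other three (larger) bases contribute at least $n+1$ each. I would keep the write-up to a single short paragraph, phrasing it as ``if $x=1$ then $n^x=n<n+1\le (n+1)^y\le$ right-hand side,'' which yields the claim immediately.
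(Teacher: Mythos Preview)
Your argument is correct and matches the paper's first remark: the paper notes that $x>y$ is immediate from $n^x>(n+1)^y$, which is exactly your size comparison. The only thing to mention is that the paper additionally records a second, congruence-based proof (reducing modulo $n+2$ for $n\ge 3$) solely because the later application to Skolem's conjecture (Corollary~\ref{cor-skolem}) requires a modular rather than an archimedean obstruction; your proof does not supply this, but for the lemma as stated nothing more is needed.
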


This follows immediately as it is clear that $x>y$ in equation \eqref{eq-th1}.
However, towards Corollary \ref{cor-skolem}, we give another proof (for the case where $n \ge 3$).

\begin{proof}[Proof of Lemma $\ref{lem-xge2}$]
Assume that $n \ge 3$.
Suppose on the contrary that $x=1$.
Reducing equation \eqref{eq-th1} modulo $n+2$ implies that
\[
3+(-1)^y \equiv 0 \mod{n+2}.
\]
However, this does not hold as $0<3+(-1)^y<5 \le n+2$. 
\end{proof}

All four lemmas in the previous section are applied for $(\delta_1,\delta_2,\delta_3)=(-1,-1,-1)$, together with Lemma \ref{lem-xge2}, and we obtain the following lemmas:

\begin{lem}\label{th1-lem-mod3}
The following hold.
\begin{itemize}
\item[(i)] 
If $n \equiv 0 \pmod{3},$ then $z$ is odd.
\item[(ii)] 
If $n \equiv 1 \pmod{3},$ then equation \eqref{eq-th1} has no solution.
\item[(iii)] 
If $n \equiv 2 \pmod{3},$ then $x$ is odd and $w$ is even.
\end{itemize}
\end{lem}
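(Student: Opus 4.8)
The plan is to obtain all three assertions as immediate specializations of Lemma \ref{lem-mod3} to the sign pattern of equation \eqref{eq-th1}. Since \eqref{eq-th1} is precisely equation \eqref{eq-main-2} with $(\delta_1,\delta_2,\delta_3)=(-1,-1,-1)$, I would substitute these values into the three conclusions of Lemma \ref{lem-mod3} and simply read off what they say about the parities of the exponents.

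For part (i), when $n \equiv 1 \pmod{3}$ is excluded and $n \equiv 0 \pmod{3}$ holds, Lemma \ref{lem-mod3}(i) yields
\[
(-1)^z = -\delta_1 \delta_2 = -(-1)(-1) = -1,
\]
so $z$ is odd. For part (ii), when $n \equiv 1 \pmod{3}$, Lemma \ref{lem-mod3}(ii) forces $\delta_3 = 1$; but in \eqref{eq-th1} one has $\delta_3 = -1$, a contradiction, and hence no solution exists (this also recovers Corollary \ref{cor-n1mod3} in the present case). For part (iii), when $n \equiv 2 \pmod{3}$, Lemma \ref{lem-mod3}(iii) gives
\[
(-1)^x = \delta_2 = -1, \qquad (-1)^w = \delta_2 \delta_3 = (-1)(-1) = 1,
\]
whence $x$ is odd and $w$ is even. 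This disposes of all three cases.

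There is no genuine obstacle here: the entire content is the bookkeeping of the sign products $-\delta_1\delta_2$, the constraint $\delta_3=1$, and the pair $\delta_2,\ \delta_2\delta_3$, each evaluated at $\delta_1=\delta_2=\delta_3=-1$. The only point requiring care is correct sign tracking — in particular, noticing that the $n \equiv 1 \pmod{3}$ case yields an outright contradiction from the fixed requirement $\delta_3=1$, rather than merely a parity restriction as in the other two cases. I note that Lemma \ref{lem-xge2} is not actually invoked for this mod-$3$ lemma; it will instead be needed when the mod-$4$ and mod-$(n+1),\,(n+2)$ lemmas are specialized, so I would reserve it for those steps.
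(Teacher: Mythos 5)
Your proposal is correct and matches the paper exactly: the paper derives Lemma \ref{th1-lem-mod3} by specializing Lemma \ref{lem-mod3} to $(\delta_1,\delta_2,\delta_3)=(-1,-1,-1)$, and your sign computations $-\delta_1\delta_2=-1$, $\delta_3=-1\neq 1$, and $(\delta_2,\delta_2\delta_3)=(-1,1)$ are all right. Your side remark that Lemma \ref{lem-xge2} is not needed for this particular specialization is also accurate.
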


\begin{lem}\label{th1-lem-mod4}
The following hold.
\begin{itemize}
\item[(i)] 
If $n \equiv 0 \pmod{4},$ then at least one of the following cases holds.
\begin{itemize}
\item[$\bullet$] $w$ is odd and $z>1.$
\item[$\bullet$] $w$ is even and $z=1.$
\end{itemize}
Further, if $n \equiv 0 \pmod{8},$ then $z=2$ and $w$ is odd.
\item[(ii)] 
If $n \equiv 1 \pmod{4},$ then at least one of the following cases holds.
\begin{itemize}
\item[$\bullet$] $z$ is even and $y>1.$
\item[$\bullet$] $z$ is odd and $y=1.$
\end{itemize}
\item[(iii)] 
If $n \equiv 2 \pmod{4},$ then $x$ is odd.
\item[(iv)] 
If $n \equiv 3 \pmod{4},$ then at least one of the following cases holds.
\begin{itemize}
\item[$\bullet$] $x$ is even and $w>1.$
\item[$\bullet$] $x$ is odd and $w=1.$
\end{itemize}
\end{itemize}
\end{lem}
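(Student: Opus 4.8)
The plan is to read off Lemma~\ref{th1-lem-mod4} as the specialization of Lemma~\ref{lem-mod4} to the sign pattern $(\delta_1,\delta_2,\delta_3)=(-1,-1,-1)$, reinforced by the lower bound $x \ge 2$ supplied by Lemma~\ref{lem-xge2}. First I would record the products of signs that occur in this case, namely $\delta_1\delta_3=1$, $\delta_1\delta_2=1$, $\delta_2\delta_3=1$, and $\delta_2=-1$, together with the fact that $\delta_1=\delta_2=\delta_3$. With these in hand, every condition of the form $(-1)^{\bullet}=\pm\delta_i$ or $(-1)^{\bullet}=\pm\delta_i\delta_j$ appearing in Lemma~\ref{lem-mod4} becomes a bare parity statement on the corresponding exponent. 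For instance, in part~(i) the alternative $(-1)^w=-\delta_1\delta_3=-1$ becomes ``$w$ odd'' and $(-1)^w=\delta_1\delta_3=1$ becomes ``$w$ even''; in part~(ii) the alternatives $(-1)^z=-\delta_2=1$ and $(-1)^z=\delta_2=-1$ become ``$z$ even'' and ``$z$ odd'' respectively; and part~(iv) is handled identically through $(-1)^x=\pm\delta_2$. Transcribing each branch this way turns the four cases of Lemma~\ref{lem-mod4} into the four cases asserted here.

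The only step beyond transcription is the use of Lemma~\ref{lem-xge2}. Several of the dichotomies in Lemma~\ref{lem-mod4} split precisely according to whether $x=1$ or $x>1$; since $x \ge 2$ is now guaranteed, I would discard every branch carrying the hypothesis $x=1$, so that the relevant dichotomy collapses to its single surviving alternative. This is exactly the mechanism producing the unconditional conclusion in part~(iii). For the supplementary clause of part~(i) I would invoke the extra statement in Lemma~\ref{lem-mod4}(i): because $\delta_1=\delta_2=\delta_3$ holds in our situation, the hypothesis $n\equiv 0\pmod 8$ yields the congruence $1+2^z+3^w\equiv 0\pmod 8$, and a direct inspection of the residues of $2^z$ and $3^w$ modulo $8$ forces $z=2$ and $w$ odd.

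Since every ingredient is already established, there is no genuine obstacle; the task is essentially bookkeeping. The single point warranting care is keeping the sign conventions straight when passing from an expression $(-1)^{\bullet}=\pm\delta_i\delta_j$ to a parity, because a misread sign would interchange ``odd'' and ``even'' in the conclusions. To guard against this I would cross-check each of the four residue classes by reducing equation~\eqref{eq-th1} modulo $4$ (and, where needed, modulo $8$) directly, rather than relying solely on the generic formulation of Lemma~\ref{lem-mod4}, confirming in each case that the surviving branch after imposing $x \ge 2$ matches the stated alternative.
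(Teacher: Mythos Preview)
Your approach is exactly the paper's: it states outright that Lemmas~\ref{th1-lem-mod3}--\ref{th1-lem-modnplus2} arise by specializing the four lemmas of Section~\ref{sec-3} to $(\delta_1,\delta_2,\delta_3)=(-1,-1,-1)$ together with Lemma~\ref{lem-xge2}, and gives no further argument. Your bookkeeping for parts (i), (ii), (iv) and the mod-$8$ clause is correct.

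There is one discrepancy worth flagging in part~(iii). Specializing Lemma~\ref{lem-mod4}(iii) with $\delta_1\delta_3=1$ gives the dichotomy ``$y$ odd and $x>1$'' versus ``$y$ even and $x=1$''; after invoking $x\ge 2$ the surviving conclusion is that \emph{$y$ is odd}, not that $x$ is odd. Indeed, reducing \eqref{eq-th1} modulo $4$ for $n\equiv 2\pmod 4$ yields $2^x\equiv(-1)^y+1\pmod 4$, which for $x\ge 2$ forces $(-1)^y=-1$ and says nothing about the parity of $x$. So your method proves ``$y$ is odd'' here, and the printed ``$x$ is odd'' appears to be a slip in the statement. (The assertion ``$x$ is odd'' for $n\equiv 2\pmod 4$ is in fact true, but it comes from Lemma~\ref{th1-lem-modnplus1}, since $n+1\equiv 3\pmod 4$ cannot be a power of $2$; it is not a consequence of the mod-$4$ reduction.) You may want either to note this or to record the conclusion your computation actually yields.
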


\begin{lem}\label{th1-lem-modnplus1}
At least one of the following cases holds.
\begin{itemize}
\item[(i)] 
$x$ is even, and $n=2^r-1$ for some $r \in \mathbb N$ with $r \ge 2.$
\item[(ii)] 
$x$ is odd, $w=1,$ and $n=3.$
\item[(iii)] 
$x$ is odd, $w>1,$ and $n \in \{f-1,2f-1\}$ for some odd $f \in \mathbb N$ such that $f \mid (2^{w-1}+1).$
\end{itemize}
\end{lem}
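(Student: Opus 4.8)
The plan is to obtain Lemma \ref{th1-lem-modnplus1} as a direct specialization of Lemma \ref{lem-modnplus1} to the sign pattern $(\delta_1,\delta_2,\delta_3)=(-1,-1,-1)$, simplifying each of the three cases in turn. The only substantive translation is the parity dictionary: since $\delta_2=-1$, the condition $(-1)^x=-\delta_2$ reads $(-1)^x=1$, i.e.\ $x$ is even, whereas $(-1)^x=\delta_2$ reads $(-1)^x=-1$, i.e.\ $x$ is odd. With this in hand each case of Lemma \ref{lem-modnplus1} maps to the correspondingly numbered case here. Case (i) transfers verbatim: $x$ even together with $n=2^r-1$ for some $r\ge 2$ is exactly case (i).

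Next, for case (ii) I would substitute $\delta_2=\delta_3=-1$ into the divisibility condition $2(\delta_2+\delta_3)\equiv 0 \pmod{n+1}$, which becomes $4\equiv 0 \pmod{n+1}$, i.e.\ $(n+1)\mid 4$. Since $n>1$ forces $n+1\ge 3$, the only divisor of $4$ exceeding $2$ is $4$ itself, so $n+1=4$ and hence $n=3$. Together with $x$ odd and $w=1$ this yields case (ii). Pinning down $n=3$ from $(n+1)\mid 4$ is the only place requiring a (trivial) argument, and is thus the closest thing to an obstacle in an otherwise mechanical proof.

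Finally, for case (iii) I would use $\delta_2\delta_3=1$, so the divisibility condition $f\mid(2^{w-1}+\delta_2\delta_3)$ becomes $f\mid(2^{w-1}+1)$; the remaining data ($x$ odd, $w>1$, and $n\in\{f-1,2f-1\}$ for odd $f$) carry over unchanged. Note that the hypothesis $f>1$ appearing in Lemma \ref{lem-modnplus1} need not be restated here, as it is automatic: $n>1$ already forces $f\ge 2$, and $f$ odd then gives $f\ge 3$. Assembling the three cases completes the lemma.
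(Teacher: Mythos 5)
Your proposal is correct and matches the paper exactly: the paper obtains this lemma precisely by specializing Lemma \ref{lem-modnplus1} to $(\delta_1,\delta_2,\delta_3)=(-1,-1,-1)$, with the same parity translation, the same deduction $n+1\mid 4$ hence $n=3$ in case (ii), and the same simplification $f\mid(2^{w-1}+1)$ in case (iii). Your remark that $f>1$ is automatic is a correct (and welcome) justification for the paper's silent omission of that condition.
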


\begin{lem}\label{th1-lem-modnplus2}
At least one of the following cases holds.
\begin{itemize}
\item[(i)] 
$y$ is odd, and $n=2^s-2$ for some $s \in \mathbb N$ with $s \ge 2.$
\item[(ii)] 
$y$ is even, and $n \in \{g-2,2g-2\}$ for some odd $g \in \mathbb N$ such that $g \mid (2^{x-1}-(-1)^x).$
\end{itemize}
\end{lem}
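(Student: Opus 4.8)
The plan is to obtain Lemma \ref{th1-lem-modnplus2} as a direct specialization of Lemma \ref{lem-modnplus2} to the sign vector $(\delta_1,\delta_2,\delta_3)=(-1,-1,-1)$, refined by the lower bound on $x$ supplied by Lemma \ref{lem-xge2}. First I would record the two products of signs that govern every branch of Lemma \ref{lem-modnplus2}: here $\delta_1\delta_3=1$, so that $-\delta_1\delta_3=-1$, while $\delta_3=-1$. With these values, the condition $(-1)^y=-\delta_1\delta_3$ of case (i) reads exactly ``$y$ is odd'', whereas the condition $(-1)^y=\delta_1\delta_3$ appearing in cases (ii) and (iii) reads ``$y$ is even''.

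Next I would translate each of the three cases under this substitution. Case (i) of Lemma \ref{lem-modnplus2} already yields $y$ odd together with $n=2^s-2$ for some $s\ge 2$, which is precisely case (i) of the target statement and needs no modification. For case (iii), substituting $\delta_3=-1$ turns the divisibility $g\mid(2^{x-1}+\delta_3(-1)^x)$ into $g\mid(2^{x-1}-(-1)^x)$, while the conclusion $n\in\{g-2,2g-2\}$ with $g$ odd is unchanged; since the hypothesis $x>1$ there holds automatically by Lemma \ref{lem-xge2}, this branch reproduces exactly case (ii) of the target statement.

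The only point requiring attention, and the reason Lemma \ref{lem-xge2} must be invoked, is the disposal of case (ii) of Lemma \ref{lem-modnplus2}. That case forces $x=1$, which directly contradicts the established bound $x\ge 2$; hence it cannot occur for solutions of equation \eqref{eq-th1} and may simply be discarded. For completeness one could remark that its congruence $2-2\delta_3\equiv 0\pmod{n+2}$ specializes to $4\equiv 0\pmod{n+2}$, which would force $n=2$, but this observation is unnecessary once the branch is ruled out on the grounds of $x$. After this elimination the surviving alternatives (i) and (iii) coincide with the two cases claimed, completing the argument.

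I do not anticipate any genuine obstacle: the entire content is sign bookkeeping together with the observation that the $x=1$ branch is forbidden. The one place to exercise care is to apply the computations $\delta_1\delta_3=1$ and $\delta_3=-1$ consistently across all three branches, so that the parities of $y$ and the divisibility condition on $g$ emerge in the stated form.
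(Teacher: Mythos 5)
Your proposal is correct and is essentially identical to the paper's (implicit) argument: the paper obtains Lemma \ref{th1-lem-modnplus2} precisely by specializing Lemma \ref{lem-modnplus2} to $(\delta_1,\delta_2,\delta_3)=(-1,-1,-1)$ and using Lemma \ref{lem-xge2} to discard the $x=1$ branch. The sign bookkeeping ($\delta_1\delta_3=1$, $\delta_3=-1$) and the resulting parities and divisibility condition all check out.
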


We are now in position to prove a main case of Theorem \ref{th-1230}.

\begin{lem}\label{lem-6543-ngt3}
If $n \ge 4,$ then equation \eqref{eq-th1} has no solution. 
\end{lem}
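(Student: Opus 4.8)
The plan is to prove Lemma \ref{lem-6543-ngt3} by combining the congruence restrictions of Lemmas \ref{th1-lem-mod3}--\ref{th1-lem-modnplus2} so as to first pin down the arithmetic shape of $n$ very tightly, and then to eliminate the few surviving families by passing to a carefully chosen auxiliary modulus. The key structural observation is that Lemma \ref{th1-lem-modnplus1} and Lemma \ref{th1-lem-modnplus2} each force $n$ to be close to a power of $2$ or close to a small odd divisor of something like $2^{w-1}+1$ or $2^{x-1}-(-1)^x$. By Lemma \ref{th1-lem-mod3}(ii), the case $n\equiv 1\pmod 3$ is already excluded, so I only need to treat $n\equiv 0$ and $n\equiv 2\pmod 3$; this residue information will be the main lever for deciding the parities of the exponents through Lemmas \ref{th1-lem-mod3} and \ref{th1-lem-mod4}.

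The first main step is to fix the parity of $x$. By Lemma \ref{th1-lem-modnplus1}, either $x$ is even with $n=2^r-1$, or $x$ is odd; and when $x$ is odd with $w=1$ the only possibility is $n=3$, which is excluded since $n\ge 4$. So if $x$ is odd then $w>1$ and $n\in\{f-1,2f-1\}$ with $f\mid 2^{w-1}+1$ odd. I would run these in parallel with the $n\bmod 4$ cases of Lemma \ref{th1-lem-mod4}: for instance $n\equiv 2\pmod 4$ forces $x$ odd, whereas $n\equiv 0\pmod 8$ forces $z=2$ and $w$ odd, and these are very restrictive when combined with Lemma \ref{th1-lem-modnplus2}, which forces $y$ odd with $n=2^s-2$ or $y$ even with $n\in\{g-2,2g-2\}$. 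The idea is that the ``$n$ is one less than a power of $2$'' constraint from $n+1$ and the ``$n$ is two less than a power of $2$'' constraint from $n+2$ are essentially incompatible unless $n$ is tiny, so crossing these lemmas should leave only a short explicit list of candidate $n$ (together with forced parities), each of which I then knock out by reducing \eqref{eq-th1} modulo a further prime or prime power dividing $n+1$, $n+2$, or a small fixed modulus such as $12(n+1)(n+2)$ anticipated in Corollary \ref{cor-skolem}.

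The heart of the argument, and the step I expect to be the main obstacle, is producing the single explicit modulus $M$ (matching the $M=12(n+1)(n+2)$ of Corollary \ref{cor-skolem}) that rules out all remaining configurations uniformly in $n$, rather than eliminating candidates one family at a time. The difficulty is that reductions modulo $n+1$ and $n+2$ only constrain $n$ itself, not the individual exponents $y,z,w$, so I must extract \emph{multiplicative-order} information: modulo a prime $p\mid n+2$, the residue $(n+1)^y\equiv(-1)^y$ and $(n+3)^w\equiv 1$, so \eqref{eq-th1} collapses to a congruence for $n^x\bmod p$ whose solvability depends on the order of $n$ mod $p$, and symmetrically modulo $p\mid n+1$. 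The plan is to combine the parity data already obtained with these order constraints via CRT across the fixed part $12$ and the variable part $(n+1)(n+2)$, showing the resulting system is inconsistent. The delicate point is handling the sporadic small solutions $(2,5,1,1,2)$ and $(3,3,2,1,1)$ from Theorem \ref{th-1230}: these occur precisely at $n=2,3$, so I must verify that the congruence obstruction genuinely activates for every $n\ge 4$ and does not accidentally vanish, which is exactly what makes the uniform modulus nontrivial and is the crux of the whole proof.
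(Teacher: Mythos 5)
Your proposal assembles the right toolbox (the four congruence lemmas, the exclusion of $n\equiv 1\pmod 3$, and a case split on $n\bmod 4$), and this is indeed the skeleton of the paper's argument. But as written it has a genuine gap: you never supply the mechanism that actually closes the cases, and you misplace where the difficulty lies. The paper's proof does \emph{not} construct a uniform modulus and run a CRT argument over $12(n+1)(n+2)$; the modulus in Corollary \ref{cor-skolem} is read off \emph{a posteriori} from the case analysis. The engine of the proof is a pair of quadratic-residue deductions that your plan omits entirely. Concretely: when Lemma \ref{th1-lem-modnplus2}(ii) gives an odd $g$ with $g\mid(2^{x-1}-(-1)^x)$ and $x$ is known to be odd (from Lemma \ref{th1-lem-modnplus1}), one gets $2^{x-1}\equiv-1\pmod g$ with $x-1$ even, so $-1$ is a quadratic residue mod $g$ and hence $g\equiv 1\pmod 4$; this pins down $n\bmod 8$ (when $n=2g-2$) or $n\bmod 4$ (when $n=g-2$) and yields a contradiction with the assumed residue class of $n$, or (in the case $n\equiv 0\pmod 4$) pushes $n$ into $0\pmod 8$, where Lemma \ref{th1-lem-mod4}(i) forces $z=2$ and $w$ odd and Lemma \ref{lem-mod3} then forces $n\equiv 1\pmod 3$, which is impossible. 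Similarly, for $n\equiv 2\pmod 4$ one uses that $f=n+1\equiv 3\pmod 4$ together with $f\mid(2^{w-1}+1)$ forces $w$ even, hence $-2$ is a quadratic residue mod $f$, hence $f\equiv 3\pmod 8$ and $n\equiv 2\pmod 8$, which with $n=2^s-2$ gives $n=2$.

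The second missing ingredient is the treatment of $n\equiv 3\pmod 4$, where the two ``near a power of two'' constraints you mention are \emph{not} numerically incompatible: there $n=2^r-1$ and $g=n+2=2^r+1$ coexist for every $r$. The paper resolves this by an explicit multiplicative-order computation: $n\equiv 0\pmod 3$ forces $r$ even, $x$ even forces $x-1$ odd, and writing $x-1=rQ+R$ with $0\le R<r$ and $R$ odd turns $2^{x-1}\equiv 1\pmod{2^r+1}$ into $2^R\equiv(-1)^Q\pmod{2^r+1}$, which fails by a size comparison. Your proposal gestures at ``multiplicative-order information'' but modulo primes dividing $n+1$ and $n+2$, which only re-derives Lemmas \ref{th1-lem-modnplus1} and \ref{th1-lem-modnplus2}; it does not produce either the quadratic-residue step or this final order contradiction, and without them the ``system is inconsistent'' claim is unsubstantiated.
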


\begin{proof}
Assume that $n \ge 4$ and let $(x,y,z,w)$ be any solution to equation \eqref{eq-th1}.
We distinguish four cases according to the residue of $n$ modulo 4.
\subsection{Case where $n \equiv 0 \pmod{4}$} 
Lemmas \ref{th1-lem-mod4}\,(i), \ref{th1-lem-modnplus1}\,(iii) and \ref{th1-lem-modnplus2}\,(iii) say that
\begin{align}
&\text{
either \ $2 \mid w$ and $z>1$ \ or \ $2 \nmid w$ and $z=1;$
}
\label{n0mod4-cond1}\\
& \ 2 \nmid x, \ n=f-1, \ f \in \mathbb N, \ 2 \nmid f, \ f \mid (2^{w-1}+1);
\label{n0mod4-cond2}\\
& \ 2 \mid y, \ n =2g-2, \ g \in \mathbb N, \ 2\nmid g, \ g \mid (2^{x-1}-(-1)^x),
\label{n0mod4-cond3}
\end{align}
respectively.
Since $x$ is odd by \eqref{n0mod4-cond2}, it follows from the last divisibility relation in \eqref{n0mod4-cond3} that $-1$ is a quadratic residue modulo $g$ with $g$ odd, so that $g \equiv 1 \pmod{4}$.
Then, since $n=2g-2$ by \eqref{n0mod4-cond3}, one has to find that $n \equiv 0 \pmod{8}$.
Lemma \ref{th1-lem-mod4}\,(i) says that $z=2$ and $w$ is odd.
Thus Lemma \ref{lem-mod3}\,(i,\,iii) tells us that we have to be in the case where $n \equiv 1 \pmod{3}$.
However, this contradicts Corollary \ref{cor-n1mod3}. 
\subsection{Case where $n \equiv 1 \pmod{4}$} 
Lemmas \ref{th1-lem-modnplus1}\,(iii) and \ref{th1-lem-modnplus2}\,(iii) say that
\begin{align}
&2 \nmid x, \ n=2f-1, \ f \in \mathbb N, \ 2 \nmid f, \ f \mid (2^{w-1}+1);
\label{n1mod4-cond2}\\
&2 \mid y, \ n =g-2, \ g \in \mathbb N, \ 2\nmid g, \ g \mid (2^{x-1}-(-1)^x),
\label{n1mod4-cond3}
\end{align}
respectively.
In the same way as that for case where $n \equiv 0 \pmod{4}$, one finds that $g \equiv 1 \pmod{4}$.
Then, since $n=g-2$ by \eqref{n1mod4-cond3}, one has to find that $n \equiv 3 \pmod{4}$, a contradiction.

\subsection{Case where $n \equiv 2 \pmod{4}$} 
Lemmas \ref{th1-lem-modnplus1}\,(iii) and \ref{th1-lem-modnplus2}\,(iii) say that
\begin{align}
&2 \nmid x, \ n=f-1, \ f \in \mathbb N, \ 2 \nmid f, \ f \mid (2^{w-1}+1);
\label{n2mod4-cond2}\\
&2 \nmid y, \ n =2^s-2, \ s \in \mathbb N, \ s \ge 2,
\label{n2mod4-cond3}
\end{align}
respectively.
Since $n \equiv 2 \pmod{4}$, it follows from \eqref{n2mod4-cond2} that $f=n+1 \equiv 3 \pmod{4}$.
Then, by the last divisibility relation in \eqref{n2mod4-cond2}, one finds that $w$ is even, so that $-2$ is a quadratic residue modulo $f$ with $f$ odd, whereby $f \equiv 3 \pmod{8}$.
Therefore, $n=f-1 \equiv 2 \pmod{8}$.
This together with the second item in \eqref{n2mod4-cond3} implies that $s=2$, namely, $n=2$.

\subsection{Case where $n \equiv 3 \pmod{4}$} 
Lemmas \ref{th1-lem-mod3} and \ref{th1-lem-modnplus2}\,(iii) say that
\begin{align}
&\text{
either \ $n \equiv 0 \pmod{3}$ \ or \ $n \equiv -1 \pmod{3};$
}
\label{n3mod4-cond1}\\
& \ 2 \mid y, \ n =g-2, \ g \in \mathbb N, \ 2\nmid g, \ g \mid (2^{x-1}-(-1)^x),
\label{n3mod4-cond3}
\end{align}
respectively.
If $n \ne 3$, then Lemma \ref{th1-lem-modnplus1}\,(i) says that
\begin{align}
2 \mid x, \ n=2^r-1, \ r \in \mathbb N, \ r \ge 2.
\label{n3mod4-cond2}
\end{align}
Observe that 
\[
n=g-2=2^r-1.
\]
Since $n=2^r-1 \not\equiv -1 \pmod{3}$, it follows from \eqref{n3mod4-cond1} that $n \equiv 0 \pmod{3}$, so that $r$ is even.
Also, since $g=2^r+1$, and $x$ is even by \eqref{n3mod4-cond2}, it follows from the last divisibility relation in \eqref{n3mod4-cond3} that 
\begin{equation}\label{n3mod4-cond4}
2^{x-1} \equiv 1 \mod{(2^r+1)}.
\end{equation}
Finally, write 
\[
x-1=r Q + R
\]
for some integers $Q,R$ with $Q \ge 0$ and $0 \le R<r$.
Note that $R$ is odd as $x-1$ is odd and $r$ is even.
Thus, congruence \eqref{n3mod4-cond4} leads to
\[
2^R \equiv (-1)^Q \mod{(2^r+1)}
\]
with $2^R>1$.
This implies that
\[
2^R - (-1)^Q \ge 2^r+1>2^R+1,
\]
which is clearly absurd.

\vspace{0.2cm}
To sum up, we complete the proof of Lemma \ref{lem-6543-ngt3}.
\end{proof}

It remains to show the following lemma:

\begin{lem}\label{lem-6543-n2n3}
The following hold.
\begin{itemize}
\item[$\bullet$] 
The only solution to the equation
\[ 
2^x=3^y+4^z+5^w 
\]
in positive integers $x,y,z$ and $w$ is given by $(x,y,z,w)=(5,1,1,2).$
\item[$\bullet$] 
The only solution to the equation
\[ 
3^x=4^y+5^z+6^w 
\]
in positive integers $x,y,z$ and $w$ is given by $(x,y,z,w)=(3,2,1,1).$
\end{itemize}
\end{lem}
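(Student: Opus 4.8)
The plan is to treat the two equations
\[
2^x=3^y+4^z+5^w \quad\text{and}\quad 3^x=4^y+5^z+6^w
\]
in parallel, first using congruences to collapse each into an essentially three-unknown exponential equation, and then bounding the surviving exponents by Baker's theory in both the Archimedean and the $p$-adic setting before finishing with a reduction and a direct search. \emph{Congruence reduction.} For $2^x=3^y+4^z+5^w$ the size relation forces $x\ge 4$, while Lemmas \ref{th1-lem-mod3}(iii) and \ref{th1-lem-mod4}(iii) already give that $x$ is odd and $w$ even. I would then reduce directly modulo $4$, $8$, $5$ and $16$: modulo $4$ forces $y$ odd, modulo $8$ forces $z=1$, and modulo $5$ and $16$ pin the residues $x\equiv y\equiv 1\pmod 4$ and $w\equiv 2\pmod 4$, turning the problem into $2^x=3^y+5^w+4$. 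For $3^x=4^y+5^z+6^w$, Lemmas \ref{th1-lem-mod3}(i) and \ref{th1-lem-mod4}(iv) give that $z$ is odd and split the analysis into the branches $w=1$ and $w\ge 2$; reductions modulo $8$ and a little higher then fix the parities of $x,y$ and a lower bound for $y$ in each branch, again leaving three-unknown equations such as $3^x=4^y+5^z+6$.

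\emph{Baker bounds.} On each reduced equation I would compare $n^x$ with the largest power occurring on the right-hand side. An estimate for the associated complex linear form in two logarithms should show that the two dominant terms must be quasi-balanced, i.e.\ that their ratio is at most a fixed power of $B:=\max(x,y,z,w)$; a $p$-adic estimate at $p=2$ (resp.\ $p=3$), exploiting that the left side is a perfect power of a single prime, controls the valuation of the relevant difference. Feeding these two inputs into the trivial size relation $n^x\asymp(\text{largest right-hand term})$ should then yield an explicit upper bound $B<B_0$.

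\emph{Finish.} I would next lower $B_0$ to a computationally feasible size by a standard reduction (continued fractions, or the LLL/Baker--Davenport technique) applied to the linear forms, and then verify by a direct search over the small remaining ranges, restricted to the residue classes found in the first step, that the only solutions are $(x,y,z,w)=(5,1,1,2)$ for $n=2$ and $(3,2,1,1)$ for $n=3$.

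The hard part is that, once the middle power has collapsed to a constant (the term $4^z$ becomes $+4$, and similarly for $n=3$), the reduced equation is \emph{not} a clean three-term $S$-unit equation, and no single right-hand term need dominate. In the balanced regime where all three right-hand contributions are comparable, any single complex linear form in two logarithms is bounded away from $0$ and yields nothing, so one must interlock the Archimedean estimate (to force quasi-balance) with the $p$-adic one (to use that $n^x$ is supported at a single prime), while carefully carrying the several sub-cases produced by the congruence step. Making these two families of estimates cooperate so as to produce a bound small enough to reduce is where essentially all the effort concentrates.
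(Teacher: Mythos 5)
The paper does not prove this lemma from scratch: it simply invokes Theorems 3.2 and 3.11 of Alex and Foster \cite{AF}, who classified the solutions of $w+x+y=z$ with $wxyz=2^r3^s5^t$ by elementary sieving; both equations of the lemma are instances of that classification (note $4=2^2$ and $6=2\cdot 3$). Your congruence reductions are fine as far as they go --- for $2^x=3^y+4^z+5^w$ one does get $x$ odd, $w$ even, $y$ odd and $z=1$, collapsing the equation to $2^x=3^y+5^w+4$ --- but the analytic core of your plan is not a proof, and you essentially say so yourself. Once the equation has four terms with three of them of unbounded size, there is no linear form in logarithms that is forced to be small: from $2^x=3^y+5^w+4$ one only gets $0<x\log 2-\max(y\log 3,\,w\log 5)<\log 3$, an upper bound of constant size rather than one tending to $0$, so the Archimedean lower bound from Baker's theory produces neither a contradiction nor a bound on $B$ in the balanced regime. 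Likewise the $2$-adic step would need to control $\nu_2(3^y+5^w+4)=x$, but this is a three-term sum and the $p$-adic theory of linear forms in logarithms does not apply to it directly. Your closing paragraph correctly identifies this balanced case as the obstruction, but identifying an obstruction is not the same as overcoming it: as written, the step ``these two inputs \dots\ yield an explicit upper bound $B<B_0$'' is unjustified and would fail for exactly the reason you describe. To complete the argument along self-contained lines you would need either a much deeper congruence sieve that eliminates all large exponents outright (in the spirit of Alex--Foster, or of the Bert\'ok--Hajdu algorithm used elsewhere in this paper), or a genuinely new device for the balanced case; alternatively, simply cite \cite{AF} as the paper does.
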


\begin{proof}
The first assertion follows from a result of Alex and Foster \cite[Theorem 3.2]{AF}, and the second one follows from \cite[Theorem 3.11]{AF}.
\end{proof}

The combination of Lemmas \ref{lem-6543-ngt3} and \ref{lem-6543-n2n3} completes the proof of Theorem \ref{th-1230}.

\section{Proof of Theorem \ref{th-miscellaneous}}\label{sec-4}

Since the proof of Theorem \ref{th-miscellaneous} is similar to that of Theorem \ref{th-1230}, we will briefly argue.

\subsection{Case where $(\delta_1,\delta_2,\delta_3)=(1,1,-1)$}
\label{subsec-(1)(1)(-1)}
Let $(x,y,z,w)$ be any solution to equation \eqref{eq-(1)(1)(-1)}.
We proceed similarly to the proof of Theorem \ref{th-1230}.
Clearly, $w>1$.

\vspace{0.2cm}{\it Case where $n \equiv 0 \pmod{4}.$} \ 
Lemmas \ref{lem-modnplus1}\,(iii) and \ref{lem-modnplus2}\,(iii) say that
\[
2 \mid x; \ \ \ n =2g-2, \ g \in \mathbb N, \ 2 \nmid g, \ g \mid (2^{x-1}-(-1)^x),
\]
respectively. 
Since $x$ is even, it follows from the divisibility relation on $g$ that $g \equiv \pm 1 \pmod{8}$.
Thus $n=2g-2 \equiv 0 $ or $12 \pmod{16}$.

\vspace{0.2cm}{\it Case where $n \equiv 1 \pmod{4}.$} \ 
Lemmas \ref{lem-modnplus1}\,(iii) and \ref{lem-modnplus2}\,(iii) say that
\[
2 \mid x; \ \ \ 
n =g-2, \ g \in \mathbb N, \ 2 \nmid g, \ g \mid (2^{x-1}-(-1)^x),
\]
respectively.
In the same way as that in case where $n \equiv 0 \pmod{4}$, one has $g \equiv \pm 1 \pmod{8}$.
On the other hand, $g=n+2 \equiv 3 \pmod{4}$. 
Thus, $g \equiv -1 \pmod{8}$, so that $n=g-2 \equiv 5 \pmod{8}$.

\vspace{0.2cm}{\it Case where $n \equiv 2 \pmod{4}.$} \ 
Since $x$ is even by Lemma \ref{lem-modnplus1}\,(iii), it follows from Lemma \ref{lem-modnplus2}\,(i) that
\[
n=2^s-2, \ s \in \mathbb N, \ s \ge 2.
\]
Thus, either $n=2$ or $n \equiv 6 \pmod{8}$.

\vspace{0.2cm}
To sum up, we are in one of the following cases:
\begin{alignat*}{4}
&n \equiv 0 \pmod{16}; & \ \ &n \equiv 12 \pmod{16}; & \ \ & n \equiv 5 \pmod{8}; \\
&n=2; & &n \equiv 6 \pmod{8}; & \ \ &n \equiv 3 \pmod{4}.
\end{alignat*}
It is known by \cite[Theorem 3.1]{AF} that there is no soltuion to equation \eqref{eq-(1)(1)(-1)} for $n=2$.
Thus, assertion (i) holds.

\subsection{Case where $(\delta_1,\delta_2,\delta_3)=(1,-1,-1)$}
\label{subsec-(1)(-1)(-1)}

We proceed similarly to Subsection \ref{subsec-(1)(1)(-1)}.

\vspace{0.2cm}{\it Case where $n \equiv 0 \pmod{4}.$} \ 
Lemmas \ref{lem-modnplus1}\,(iii) and \ref{lem-modnplus2}\,(iii) say that
\[
2 \nmid x; \ \ \ 
n =2g-2, \ g \mid (2^{x-1}-(-1)^x),
\]
respectively.
Since $x$ is odd, it follows from the divisibility relation on $g$ that $g \equiv 1 \pmod{4}$.
Thus $n=2(g-1) \equiv 0 \pmod{8}$.

\vspace{0.2cm}{\it Case where $n \equiv 1 \pmod{4}.$} \ 
Lemmas \ref{lem-modnplus1}\,(iii) and \ref{lem-modnplus2}\,(iii) say that
\[
2 \nmid x; \ \ \ 
n =g-2, \ g \mid (2^{x-1}-(-1)^x),
\]
respectively.
Similarly to case where $n \equiv 0 \pmod{4}$, one has $g \equiv 1 \pmod{4}$, so that $n=g-2 \equiv 3 \pmod{4}$, a contradiction.

\vspace{0.2cm}{\it Case where $n \equiv 2 \pmod{4}.$} \ 
Lemma \ref{lem-modnplus2}\,(i,\,ii) says that we are in one of the following cases:
\[
n=2^s-2, \ s \ge 2; \ \ \ 
n=2.
\]
Thus, either $n \equiv 6 \pmod{8}$ or $n=2$.

\vspace{0.2cm}{\it Case where $n \equiv 3 \pmod{4}.$} \ 
Lemma \ref{lem-modnplus1}\,(i,\,ii) says that we are in one of the following cases:
\[
n=2^r-1, \ r \ge 2; \ \ \ 
n=3.
\]
Thus, either $n \equiv 7 \pmod{8}$ or $n=3$.

\vspace{0.2cm}
To sum up, we are in one of the following cases:
\[
n \equiv 0 \pmod{8}; \ \ n \equiv 6 \pmod{8}; \ \ n=2; \ \ n \equiv 7 \pmod{8}; \ \ n=3.
\]
Therefore, it remains to show the following lemma:

\begin{lem}\label{lem-eq(1)(-1)(-1)-n2n3}
The following hold.
\begin{itemize}
\item[$\bullet$] 
All solutions to the equation
\[ 
2^x+3^y=4^z+5^w 
\]
in positive integers $x,y,z$ and $w$ are given by $(x,y,z,w)=(1,3,1,2),$\\$(3,4,3,2),(5,2,2,2).$
\item[$\bullet$] 
The only solution to the equation
\[ 
3^x+4^y=5^z+6^w 
\]
in positive integers $x,y,z$ and $w$ is given by $(x,y,z,w)=(3,1,2,1).$
\end{itemize}
\end{lem}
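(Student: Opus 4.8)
The plan is to recognize both displayed equations as purely exponential $S$-unit equations over the single prime set $\{2,3,5\}$: in the first, $4=2^2$, so it reads $2^x+3^y=2^{2z}+5^w$, and in the second $4=2^2$ and $6=2\cdot 3$, so it reads $3^x+2^{2y}=5^z+2^w3^w$. Both therefore lie squarely within the systematic classification of four-term equations among $\{2,3,5\}$-units carried out by Alex and Foster \cite{AF}, the same source already invoked for the $n=2,3$ instances in Lemma \ref{lem-6543-n2n3} (via Theorems 3.2 and 3.11) and for \eqref{eq-(1)(1)(-1)} with $n=2$ (via Theorem 3.1). My first step is thus to match each equation with the corresponding entry of \cite{AF} and read off its complete solution set, after verifying directly that each listed tuple satisfies the equation (e.g. $2^5+3^2=16+25$ and $3^3+4=25+6$).

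Should a citation in exactly this shape not be available, I would reprove the two statements self-containedly, in the elementary-plus-analytic spirit of the rest of the paper. The opening step is a battery of congruences pinning down the residues of the exponents: reducing $2^x+3^y=4^z+5^w$ modulo $3$ already forces $w$ even and $x$ odd, and further reductions modulo $4,5,8,9,16$ constrain $y$ and the small residues of $z$, isolating a short list of congruence classes. In each surviving class one extracts the sporadic solutions stated and is left with a \emph{large-exponent} regime, in which the three nonzero differences (such as $2^x-2^{2z}$ and $5^w-3^y$) are all large; the same scheme applies verbatim to $3^x+4^y=5^z+6^w$, where reduction modulo $3$ forces $z$ even.

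For that regime the plan is to pair terms and invoke Baker's theory on linear forms in logarithms, in both the archimedean and the $p$-adic settings for $p=2,3,5$, so as to bound $x,y,z,w$ explicitly; the bounds would then be reduced by continued-fraction or LLL-based lattice reduction and cleared by a finite machine search, exactly the mixture of methods advertised in the introduction. The main obstacle is the one flagged there: a genuine four-term $S$-unit equation does not in general admit effective bounds, so, unlike the $n\ge 4$ analysis underlying Theorem \ref{th-1230}, congruences alone will not close these fixed equations. The crux is therefore to arrange the pairing so that Baker's inequalities apply to honest two- or three-term forms and to push the numerical reduction through. Granting the prior determination of these equations in \cite{AF}, however, the lemma follows immediately, and this is the route I would follow in the finished proof.
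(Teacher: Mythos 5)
There is a genuine gap in both branches of your plan. Your primary route is to look the two equations up in Alex--Foster \cite{AF}, but that paper classifies solutions of $w+x+y=z$ with $wxyz=2^r3^s5^t$, i.e.\ vanishing four-term sums with \emph{three} terms on one side and one on the other. That is why the present paper can quote \cite{AF} for the $n=2,3$ instances of \eqref{eq-(-1)(-1)(-1)}, \eqref{eq-(1)(1)(-1)} and \eqref{eq-(-1)(1)(1)} (Lemma \ref{lem-6543-n2n3} and Subsections \ref{subsec-(1)(1)(-1)}, \ref{subsec-(-1)(1)(-1)}), all of which have that $3+1$ shape. The equations in the present lemma, $2^x+3^y=4^z+5^w$ and $3^x+4^y=5^z+6^w$, have the balanced shape $a+b=c+d$, which is not covered by the cited results of \cite{AF}; this is precisely why the authors switch methods here. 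Your fallback route (congruences, then Baker's method in the archimedean and $p$-adic settings, then lattice reduction) runs into the obstacle you yourself flag: for a genuine four-term $S$-unit equation there is no general effective bound, and the crucial step of ``arranging the pairing so that Baker's inequalities apply to honest two- or three-term forms'' is exactly the part that is not supplied. Asserting that the crux can be pushed through is not a proof, so as written the argument does not close.

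The paper's own proof is entirely different and much lighter: it verifies both statements by running the Bert\'ok--Hajdu algorithm \cite{BH} (cf.\ \cite{Ha}), a Hasse-type congruence sieve that searches for a modulus $M$ whose residue information pins every solution down to the listed tuples; no linear forms in logarithms are used at all (the authors acknowledge Hajdu's help in running the algorithm). Your opening congruence observations are correct (reduction modulo $3$ does force $w$ even and $x$ odd in the first equation, and $z$ even in the second), and they are in the spirit of that algorithm, but on their own they are far from determining the full solution sets.
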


\begin{proof}
This can be verified using the algorithm of Bert\'ok and Hajdu \cite{BH}.
\end{proof}

We finish the proof of assertion (ii).

\subsection{Case where $(\delta_1,\delta_2,\delta_3)=(-1,1,1)$} 
\label{subsec-(-1)(1)(-1)}
We proceed similarly to Subsections \ref{subsec-(1)(1)(-1)} and \ref{subsec-(1)(-1)(-1)}.

\vspace{0.2cm}{\it Case where $n \equiv 0 \pmod{4}.$} \ 
The combination of Lemmas \ref{lem-modnplus1}\,(iii) and \ref{lem-modnplus2}\,(iii) implies that
\[
2 \mid x, \ \ n =2g-2, \ g \mid (2^{x-1}+(-1)^x).
\]
These together imply that $g \equiv 1$ or $3 \pmod{8}$ and $n=2g-2 \equiv 0 $ or $4 \pmod{16}$.

\vspace{0.2cm}{\it Case where $n \equiv 1 \pmod{4}.$} \ 
Lemmas \ref{lem-modnplus1}\,(iii) and \ref{lem-modnplus2}\,(iii) say that
\[
2 \mid x; \ \ \ 
n =g-2, \ g \mid (2^{x-1}+(-1)^x),
\]
respectively.
Since $n \equiv 1 \pmod{4}$, these together imply that $g \equiv 3 \pmod{8}$ and $n=g-2 \equiv 1 \pmod{8}$.

\vspace{0.2cm}{\it Case where $n \equiv 2 \pmod{4}.$} \ 
Lemmas \ref{lem-mod4}\,(iii), \ref{lem-modnplus1}\,(iii) and \ref{lem-modnplus2}\,(i,\,ii) say that
\begin{align*}
&\text{
either \ $2 \mid y$ and $x>1$ \ or \ $2 \nmid y$ and $x=1;$
}\\
&\text{
$2 \mid x$, \ $n=f-1, \ f \mid (2^{w-1}+1);$
}\\
&\text{
either \ 
$2 \mid y, \ n=2^s-2, \ s \ge 2$ \ or \ $2 \nmid y, \ x=1,$
}
\end{align*}
respectively.
These together imply that
\[
n=f-1, \ f \mid (2^{w-1}+1), \ n=2^s-2.
\]
Since $f=n+1 \equiv 3 \pmod{4}$, it follows from the divisibility relation on $f$ that $w$ is even, so that $f \equiv 3 \pmod{8}$.
Thus $n =f-1 \equiv 2 \pmod{8}$.
Since $n=2^s-2$, one has $s=2$, i.e., $n=2$.

\vspace{0.2cm}{\it Case where $n \equiv 3 \pmod{4}.$} \ 
Lemma \ref{lem-modnplus1}\,(i,\,ii) says that $n=2^r-1$ for some $r \in \mathbb N$ with $r \ge 2$.
Thus, either $n=3$ or $n \equiv 7 \pmod{8}$.

\vspace{0.2cm}
To sum up, we are in one of the following cases:
\begin{alignat*}{4}
&n \equiv 0 \pmod{16};  & \ \ & n \equiv 4 \pmod{16}; & \ \ & n \equiv 1 \pmod{8}; \\
&n=2; & \ \ & n=3;  & \ \ & n \equiv 7 \pmod{8}.
\end{alignat*}
On equation \eqref{eq-(-1)(1)(1)} it is known by \cite[Theorem 3.1]{AF} that there is no solution for $n=2$, and it is known by \cite[Theorem 3.13]{AF} that the only solution is $(x,y,z,w)=(1,3,2,2)$ for $n=3$.
Thus, assertion (iii) holds.

\subsection{Case where $(\delta_1,\delta_2,\delta_3)=(-1,1,-1)$} 
We proceed similarly to Subsections \ref{subsec-(1)(1)(-1)}, \ref{subsec-(1)(-1)(-1)} and \ref{subsec-(-1)(1)(-1)}.

\vspace{0.2cm}{\it Case where $n \equiv 0 \pmod{4}.$} \ 
Lemmas \ref{lem-modnplus1}\,(ii,\,iii) and \ref{lem-modnplus2}\,(iii) say that
\[
2 \mid x; \ \ \ 
n =2g-2, \ g \mid (2^{x-1}-(-1)^x),
\]
respectively.
These together imply that $n \equiv 0$ or $12 \pmod{16}$.

\vspace{0.2cm}{\it Case where $n \equiv 1 \pmod{4}.$} \ 
Lemmas \ref{lem-modnplus1}\,(ii,\,iii) and \ref{lem-modnplus2}\,(iii) say that
\[
2 \mid x; \ \ \ 
n =g-2, \ g \mid (2^{x-1}-(-1)^x),
\]
respectively.
These together imply that $n \equiv 5 \pmod{8}$.

\subsubsection{Case where $n \equiv 2 \pmod{4}$} 
Lemma \ref{lem-modnplus2}\,(i,\,ii) says that $n=2^s-2$ for some $s \ge 2$.
Thus, either $n=2$ or $n \equiv 6 \pmod{8}$.

\subsubsection{Case where $n \equiv 3 \pmod{4}$} 
Lemmas \ref{lem-modnplus1}\,(iii) says that we are in one of the following cases:
\[
2 \nmid x, \ n=2^r-1, \ r \ge 2; \ \ \ 
2 \mid x. 
\]
Also, Lemma \ref{lem-modnplus2}\,(iii) says that
\[
n=g-2, \ g \mid (2^{x-1}-(-1)^x).
\]
If $x$ is odd, then either $n=3$ or $n=2^r-1 \equiv 7 \pmod{8}$. 
Also, since $g=n+2 \equiv 1 \pmod{4}$, one finds that if $x$ is even, then the divisibility relation on $g$ implies that $g \equiv 1 \pmod{8}$, so that $n=g-2 \equiv 7 \pmod{8}$.

\vspace{0.2cm}
To sum up, we are in one of the following cases:
\begin{alignat*}{4}
&n \equiv 0 \pmod{16}; & \ \ &  
n \equiv 12 \pmod{16}; & \ \ &  
n \equiv 5 \pmod{8}; & \ \ & 
n=2;\\ 
&n \equiv 6 \pmod{8}; & \ \ &  
n=3; & \ \ &  
n \equiv 7 \pmod{8}.
\end{alignat*}
Therefore, it remains to show the following lemma:

\begin{lem}\label{lem-eq(-1)(1)(-1)-n2n3}
The following hold.
\begin{itemize}
\item[$\bullet$] 
The only solution to the equation
\[ 
2^x+4^z=3^y+5^w 
\]
in positive integers $x,y,z$ and $w$ are given by $(x,y,z,w)=(2,1,1,1),$\\$(4,3,2,1),(6,1,3,3).$
\item[$\bullet$] 
The only solution to the equation
\[ 
3^x+5^z=4^y+6^w 
\]
in positive integers $x,y,z$ and $w$ is given by $(x,y,z,w)=(3,2,2,2).$
\end{itemize}
\end{lem}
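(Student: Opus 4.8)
The plan is to treat each of the two equations as a purely exponential equation supported on the three primes $2,3,5$. Writing $4=2^2$ and $6=2\cdot 3$, the first equation reads $2^x+2^{2z}=3^y+5^w$ and the second reads $3^x+5^z=2^{2y}+2^w3^w$, so both are instances of an $S$-unit equation with $S=\{2,3,5\}$, which already guarantees that there are only finitely many solutions. The real task is thus to exhibit the complete list, and I would proceed exactly in the spirit of the proof of Theorem~\ref{th-1230}: first squeeze the exponents with elementary congruences, then dispose of the remaining (now tightly constrained) cases.

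First I would run the modular sieve. For the first equation, reduction modulo $3$ forces $x$ even and $w$ odd; reduction modulo $4$, using that $x\ge 2$ and $z\ge 1$, then forces $y$ odd; and successive reductions modulo $8$, $5$, $7$, $13$, $\ldots$ progressively restrict the residues of the exponents modulo the multiplicative orders of the bases. The same programme applies to the second equation, where the coupled term $2^w3^w$ must be tracked carefully modulo each chosen modulus. The goal of this stage is to cut the problem down to a handful of congruence classes for $(x,y,z,w)$, isolating the stated tuples $(2,1,1,1),(4,3,2,1),(6,1,3,3)$ and $(3,2,2,2)$ as the only small-exponent candidates, which one checks directly.

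To rule out further solutions I would invoke the algorithm of Bert\'ok and Hajdu~\cite{BH}, exactly as in the proof of Lemma~\ref{lem-eq(1)(-1)(-1)-n2n3}, which treats the analogous pair $2^x+3^y=4^z+5^w$ and $3^x+4^y=5^z+6^w$. The idea is to search for a single explicit modulus $M$ modulo which the equation admits no residue pattern beyond those coming from the listed solutions; producing such an $M$ is heuristic, but once it is exhibited the exclusion of every other solution is a finite and rigorous verification. For the first equation one could alternatively appeal to the exhaustive determination by Alex and Foster~\cite{AF} of solutions in powers of $2,3,5$, since $2^x+2^{2z}=3^y+5^w$ is supported on exactly these three primes.

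The main obstacle is establishing completeness of the list in a fully rigorous way. The Bert\'ok--Hajdu modulus, once located, yields a finite rigorous exclusion, but the \emph{existence} of such a modulus is precisely the content of Skolem's conjecture (Conjecture~\ref{conj-sk-bh}), which is open in general, and the algorithm only finds $M$ heuristically. A completeness proof that does not presuppose such an $M$ must instead bound the exponents effectively through Baker's theory on linear forms in logarithms, in both the complex and the $p$-adic settings, and then reduce the astronomically large resulting bound to a finite search. For the second equation this is the harder task: the term $6^w=2^w3^w$ genuinely couples two of the three primes, so the equation does not collapse to a two-variable problem but is a true three-prime $S$-unit equation, precisely the regime in which the linear-forms estimates are weakest and in which reliance on the Bert\'ok--Hajdu modulus (or on tabulated results) becomes essential.
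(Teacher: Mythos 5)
Your proposal lands on exactly the route the paper takes: its entire proof of this lemma is the single sentence that the two equations ``can be verified using the algorithm of Bert\'ok and Hajdu \cite{BH}'' (with the acknowledgements crediting Hajdu for running the algorithm of \cite{Ha}), so your plan of a modular sieve culminating in a Bert\'ok--Hajdu modulus is the same argument, and your caveat that locating $M$ is heuristic while the verification once $M$ is found is rigorous is accurately placed. One small correction: your suggested alternative via Alex--Foster \cite{AF} does not apply to $2^x+4^z=3^y+5^w$, since their results concern equations of the shape $w+x+y=z$ (three positive terms equal to one), whereas this equation has two terms on each side; the paper uses \cite{AF} only for the genuinely three-plus-one equations elsewhere.
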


\begin{proof}
This can be verified using the algorithm of Bert\'ok and Hajdu \cite{BH}.
\end{proof}

We finish the proof of assertion (iv), and complete the proof of Theorem \ref{th-miscellaneous}.

\subsection*{Acknowledgements}
The authors are indebted to Lajos Hajdu for his kind help to use the algorithm of \cite{Ha} in the proofs of Lemmas \ref{lem-eq(1)(-1)(-1)-n2n3} and \ref{lem-eq(-1)(1)(-1)-n2n3}.

\vskip.2cm

\noindent{\bf Data availability.} 
Data sets generated during the current study are available from the corresponding author on reasonable request.

\end{document}